\theoremstyle{plain}
\newtheorem{theorem}{Theorem}[section]
\newtheorem{corollary}[theorem]{Corollary}
\newtheorem{lemma}[theorem]{Lemma}
\theoremstyle{definition}
\theoremstyle{remark}
\newcommand\mL{L\kern-0.08cm\char39}  
\begin{document}

\title{Transitivity in nonautonomous systems}


\author[M. Ml\'{\i}chov\'a, V. Pravec]{Michaela Ml\'{\i}chov\'a, Vojt\v ech Pravec}
\email{Michaela.Mlichova@math.slu.cz, Vojtech.Pravec@math.slu.cz}

\address{Mathematical Institute, Silesian University, 746 01 Opava, Czech Republic}

\maketitle

\begin{abstract}
Let $(X,d)$ be a metric space and $f_{1,\infty}=\{f_n\}_{i=0}^{\infty}$  be a sequence of continuous maps $f_n:X\rightarrow X$ such that $(f_n)$ converges uniformly to a continuous map $f$. We investigate which conditions ensure that the transitivity of functions $f_n$ or the transitivity of the nonautonomous system $(X,f_{1,\infty})$ is inherited to the limit function $f$ and vice versa. Such problem have been studied for instance by A. Fedeli, A. Le Donne or J. Li who give different sufficient condition for inheriting of transitivity from $f_n$ to $f$. In this paper we give a survey of know result relating to this problem and prove new results concerning to transitivity.
\end{abstract}

%

\section{Introduction}\label{section1}

Throughout the paper let $(X, d)$ be a compact metric space unless it is emphasized differently and $I=[0,1]$ be the compact unit interval. Denote by $C(X)$, resp. $C(I)$, the set of continuous maps $X\rightarrow X$, resp. $I\rightarrow I$. Let $f_{1,\infty}=\{f_n\}_{n=1}^{\infty}$ be a sequence of continuous map $f_n\in C(X)$. By \textit{nonautonomous dynamical system} (NDS, for short) we mean a pair $(X,f_{1,\infty})$. For $(X,f_{1,\infty})$ and any $n, k \in \mathbb N$ we denote
$$f_n^0=id_X,\quad f_n^k:= f_{n+k-1} \circ \ldots \circ f_{n+1}\circ f_n,$$ i.e., the $k$-th iteration of $f_{1,\infty}$ starting from $f_n$
and
$$(f_n)^0=id_X,\quad(f_n)^k:= \underbrace{ f_{n} \circ \ldots \circ f_{n}\circ f_n}_{k\,times},$$ i.e., the $k$-th iteration of fiber map $f_i$.\\
\indent Recall that the system $(X,f_{1,\infty})$ is \textit{topologically transitive} if for any two open sets $U,V$ there is $n\in\mathbb{N}$ such that $f_1^n(U)\cap V\neq \emptyset$ and the system $(X,f_{1,\infty})$ has a \textit{dense orbit} if there is a point $x\in X$ whose orbit $\mathcal{O}(x,f_{1,\infty})=\{f_1^n(x);n\in\mathbb{N}_0\}$ is dense in $X$. Also note that an autonomous dynamical system $(X,f)$ is a special of NDS with $f_n=f$ for any natural number $n$.

We assume that $f_n$ is a surjective map for any $n\in\mathbb{N}$. Note that such assumption is very natural since we want to avoid some pathological examples (constant function etc.). Also we focus only on uniformly convergent NDS, i.e. such that the sequence $f_{1,\infty}$ converges uniformly to a continuous map $f$. 

Very important and frequently studied question concerning NDS is what properties are inherited from NDS $(X,f_{1,\infty})$, resp. from fiber functions $f_n$, to a limit function $f$ or vice versa. Note that in recent years such question has been intensively studied considering different properties (see e.g. \cite{C1}, \cite{C2}, \cite{Dv}, \cite{St}).\\
 In this paper we focus on a very important property which is topological transitivity and the existence dense orbit of dynamical system. It is known that even for interval NDS the assumption of uniform convergence is insufficient to ensure the inheritance of any of these conditions. More precisely one can construct a uniformly convergent NDS $f_{1,\infty}$ such that each $f_n$ is topological transitive and with dense orbit but the sequence $f_{1,\infty}$ converges to identity. On the other hand there is a interval NDS $f_{1,\infty}$ such that each $f_n$ is not topological transitive and without dense orbit but the sequence $f_{1,\infty}$ converges to the tent map. Therefore natural question is what condition a NDS or the fiber functions have to satisfy to ensure the inheritance of topological transitivity or dense orbit to limit function $f$ or vice versa. 

In recent years several authors have studied this problem and they have introduced various conditions that at least partially help to answer the question. In the following subsection we give a survey of them and in Section 2 we summarize known and new results.
\subsection{Survey of conditions}

Note that some of following conditions were labeled differently by their authors. We change the labeling for the sake of simplicity and uniformity. Conditions concerning iterations of fiber function are labeled with a star, i.e., *.
Let $D$ be the supremum metric on $C(X)$, i.e., $D(f,g)=\sup_{x\in X}d(f(x),g(x))$ then we say that the system satisfies condition:
\begin{enumerate}
\item[(CC)] if for every $\varepsilon>0$ there is an $n_0 \in \mathbb N$ such that for every $n \ge n_0$,  $k \in \mathbb N$ and every $x \in X$
	$$ d(f^k_n(x), f^k(x))< \varepsilon,$$
	
\item[(CC*)]  if for every $\varepsilon>0$ there is an $n_0 \in \mathbb N$ such that for every $n \ge n_0$,  $k \in \mathbb N$ and every $x \in X$
	$$ d((f_n)^k(x), f^k(x))< \varepsilon,$$
	
\item[(L)] if $$\lim_{n\to \infty} D(f^n_n, f^n)=0,$$

\item[(L*)] if $$\lim_{n\to \infty} D((f_n)^n, f^n)=0,$$

\item[(DO)] if there is an $x_0 \in X$ such that $\{ f_n^n (x_0) \}$ is dense in $X$,

\item[(DO*)] if there is an $x_0 \in X$ such that $\{ (f_n)^n (x_0) \}$ is dense in $X$

\end{enumerate}

\bigskip
Conditions (L*) and (DO*) are studied in \cite{Li}, \cite{RF} and \cite{YZZ}, condition (CC*) is studied in \cite{FD} and \cite{Li} and condition is studied in \cite{SR}. Conditions (DO) and (L) are introduced as natural counterpart of conditions (DO*) and (L*).\\
In the following text the systems to which the conditions (CC*), (L*) or (DO*) can be applied are denoted by FDS - fiber dynamical system, whereas the conditions (CC), (L) or (DO) can be applied to NDS.

\section{Transitivity}\label{trans}


The following Theorem was proved by Yan, Zeng and Zahng in \cite{YZZ} and it shows that if a FDS  satisfies the condition (L*) then the property (DO*) is equivalent with the topological transitivity of limit function $f$.

\begin{theorem} \cite{YZZ}
Let $(X,d)$ be a compact metric space with no isolated point, $f_n\in C(X)$, $n\in\mathbb{N}$ such that $f$ is uniform limit of $f_{1,\infty}$. Let $f_{1,\infty}$ satisfies the condition $(L*)$. Then the following are equivalent:
\begin{enumerate}
\item for any two nonempty open subsets $U$ and $V$ of $X$, $U\cap (f_n)^{-n}(V)\neq \emptyset$ for some $n\in\mathbb{N}$;
\item $\{x\in X:\{(f_n)^n(x)\}\mbox{ is dense in }X\}$ is a dense $G_{\delta} set$;
\item $f_{1,\infty}$ satisfies condition (DO*);
\item $f$ is topologically transitive.
\end{enumerate} 
\end{theorem}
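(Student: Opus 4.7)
The plan is to prove the cyclic chain $(1)\Rightarrow(2)\Rightarrow(3)\Rightarrow(4)\Rightarrow(1)$, relying on a Baire-category argument for the forward pass and using condition (L*) to transport dynamical information between the fiber iterates $(f_n)^n$ and the autonomous iterates $f^n$.

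For $(1)\Rightarrow(2)$ I would fix a countable base $\{V_k\}_{k\in\mathbb N}$ of the topology of $X$ and set $G_k:=\bigcup_{n\in\mathbb N}(f_n)^{-n}(V_k)$. Continuity of each $(f_n)^n$ makes $G_k$ open, and condition (1) says $G_k$ meets every nonempty open subset of $X$, so $G_k$ is dense. The Baire category theorem then yields that $\bigcap_k G_k$ is a dense $G_\delta$, and membership in this intersection is exactly the statement that $\{(f_n)^n(x)\}$ is dense in $X$. The implication $(2)\Rightarrow(3)$ is immediate since a dense $G_\delta$ in a nonempty compact metric space is nonempty.

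For $(3)\Rightarrow(4)$, let $x_0$ be as in (DO*), $W\subseteq X$ nonempty open, and pick a ball $W'=B(w,\delta)$ with $B(w,2\delta)\subseteq W$. By (L*) there is $N$ with $D((f_n)^n,f^n)<\delta$ for every $n\geq N$. Density of $\{(f_n)^n(x_0)\}$ combined with the no-isolated-points hypothesis forces $W'$ to be visited for infinitely many $n$ (a dense set meets a nonempty open set in infinitely many points when the space has no isolated points), so I can pick such $n\geq N$; then $f^n(x_0)\in W$. Hence the $f$-orbit of $x_0$ is dense in $X$, and the classical equivalence ``dense orbit $\Leftrightarrow$ topological transitivity'' on compact metric spaces without isolated points gives (4).

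For $(4)\Rightarrow(1)$, given nonempty open $U,V$, shrink $V$ to $V'=B(v,\delta)$ with $B(v,2\delta)\subseteq V$ and let $N$ come from (L*) applied to $\delta$. I would invoke the auxiliary fact that for a topologically transitive continuous map on a compact metric space without isolated points the hitting-time set $\{k\in\mathbb N : f^k(U)\cap V'\neq\emptyset\}$ is infinite (pick a dense $f$-orbit, note it enters $U$ arbitrarily late, and observe that every tail of a dense orbit is still dense, so it must subsequently visit $V'$). Choosing $n\geq N$ from this set and $x\in U$ with $f^n(x)\in V'$, the estimate $d((f_n)^n(x),f^n(x))<\delta$ places $(f_n)^n(x)$ in $V$, so $x\in U\cap(f_n)^{-n}(V)$, as required.

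The main obstacle is Step 4: plain transitivity of $f$ only guarantees \emph{some} $k$ witnessing $f^k(U)\cap V'\neq\emptyset$, whereas (L*) compels me to take $k\geq N$; the unboundedness of the hitting-time set, guaranteed by the no-isolated-points hypothesis, is what lets the $\delta$-approximation close the cycle. Everywhere else the work reduces to choosing the right nested balls and applying the appropriate thresholds from (L*).
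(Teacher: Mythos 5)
Your proof is correct and follows essentially the same route the paper takes for the analogous NDS version (Theorem 2.2); for the statement at hand the paper itself only cites \cite{YZZ}. The structure matches: Baire category over a countable base for $(1)\Rightarrow(2)$, and the (L*)-approximation between $(f_n)^n$ and $f^n$ combined with the no-isolated-points hypothesis for $(3)\Rightarrow(4)$ and $(4)\Rightarrow(1)$ --- indeed your explicit justification that the hitting-time set $\{k: f^k(U)\cap V'\neq\emptyset\}$ is unbounded is more careful than the paper's corresponding step, which simply asserts that the transitivity time $n$ can simultaneously be taken large enough for the approximation estimate to apply.
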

An analogous result is true also for nonautonomous dynamical system and the proof is inspired by the proof of the original Theorem. Note that in the following Theorem we define $f_n^{-n}$ as follows:
$$f_n^{-n}=(f_n^{n})^{-1}=f_n^{-1}\circ f_{n+1}^{-1}\circ\dots\circ f_{2n-2}^{-1}\circ f_{2n-1}^{-1}$$
\begin{theorem}
Let $(X,d)$ be a compact metric space with no isolated point, $f_n\in C(X)$, $n\in\mathbb{N}$ such that $f$ is uniform limit of $f_{1,\infty}$. Let $f_{1,\infty}$ satisfies the condition $(L)$. Then the following are equivalent:
\begin{enumerate}
\item for any two nonempty open subsets $U$ and $V$ of $X$, $U\cap f_n^{-n}(V)\neq \emptyset$ for some $n\in\mathbb{N}$;
\item $\{x\in X:\{f_n^n(x)\}\mbox{ is dense in }X\}$ is a dense $G_{\delta} set$;
\item $f_{1,\infty}$ satisfies condition (DO);
\item $f$ is topologically transitive.
\end{enumerate} 
\end{theorem}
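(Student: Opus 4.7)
The plan is to prove the cycle $(1)\Rightarrow(2)\Rightarrow(3)\Rightarrow(4)\Rightarrow(1)$, closely following the pattern of the $(\mathrm{L}^{*})$-proof of Yan, Zeng and Zhang but with the ``sliding block'' composition $f_n^n=f_{2n-1}\circ\cdots\circ f_n$ in place of the iterate $(f_n)^n$. The topological skeleton is unchanged; the only thing that has to be re-checked is that condition (L), which is stated exactly for this sliding block, suffices to transfer density between NDS-orbits and iterates of the limit.

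For $(1)\Rightarrow(2)$ I will run the standard Birkhoff-style Baire argument. Fix a countable open base $\{V_k\}_{k\ge 1}$ of $X$ and set $A_k:=\bigcup_{n\ge 1}f_n^{-n}(V_k)$. Each $f_n^n$ is continuous as a finite composition of continuous maps, so every $A_k$ is open; hypothesis (1) says each $A_k$ meets every nonempty open subset of $X$, so each $A_k$ is dense. Since a compact metric space is Baire, $\bigcap_k A_k$ is a dense $G_\delta$, and this intersection is exactly $\{x\in X:\{f_n^n(x)\}\text{ is dense in }X\}$. The implication $(2)\Rightarrow(3)$ is immediate because a dense $G_\delta$ set in a nonempty space is nonempty.

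For $(3)\Rightarrow(4)$ I will use (L) to show that the iterate orbit $\{f^n(x_0)\}$ is also dense, after which the classical fact that a continuous self-map of a compact metric space without isolated points with a dense orbit is topologically transitive finishes the argument. Given $y\in X$ and $\varepsilon>0$, (L) supplies $N$ with $D(f_n^n,f^n)<\varepsilon/2$ for every $n\ge N$. Because $X$ has no isolated points, every nonempty open set is infinite, so deleting the finite initial segment $\{f_n^n(x_0):n<N\}$ from the already-dense sequence $\{f_n^n(x_0)\}_{n\ge 1}$ leaves a dense tail. Hence some $n\ge N$ satisfies $d(f_n^n(x_0),y)<\varepsilon/2$, and the triangle inequality together with $d(f_n^n(x_0),f^n(x_0))\le D(f_n^n,f^n)$ yields $d(f^n(x_0),y)<\varepsilon$.

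For $(4)\Rightarrow(1)$, let $U,V\subseteq X$ be nonempty and open. Transitivity of $f$ together with compactness of $X$ and the absence of isolated points yields, again by a Baire argument, a dense $G_\delta$ of points with dense $f$-orbit; pick $x\in U$ in this set. Running the (L)-estimate from the previous paragraph symmetrically (the inequality $d(f_n^n(x),f^n(x))\le D(f_n^n,f^n)$ is neutral about which orbit approximates which), the NDS-orbit $\{f_n^n(x)\}$ is dense, so some $n$ satisfies $f_n^n(x)\in V$, which gives $x\in U\cap f_n^{-n}(V)$. The main obstacle worth monitoring is precisely this (L)-transfer: one must be careful that only the \emph{tails} of the two sequences need to be compared, since $D(f_n^n,f^n)$ is only guaranteed small for large $n$. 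The finiteness of the initial discarded block, combined with the no-isolated-points lemma on preservation of density, is what makes this work in both directions and is the only place where the structural difference between $f_n^n$ and $(f_n)^n$ has to be verified by hand.
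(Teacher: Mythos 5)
Your proof is correct, and three of the four implications ($(1)\Rightarrow(2)$, $(2)\Rightarrow(3)$, $(3)\Rightarrow(4)$) follow the paper's argument essentially verbatim: the Baire/countable-base description of the set of points with dense sliding orbit, and the transfer of density from $\{f_n^n(x_0)\}$ to $\{f^n(x_0)\}$ via (L) after discarding a finite initial block (you cite the classical ``dense orbit implies transitivity'' fact where the paper reproves it inline, which is immaterial). The one genuine divergence is $(4)\Rightarrow(1)$. The paper fixes $v\in V$, uses transitivity to produce $n$ with $U\cap f^{-n}\bigl(B_{\varepsilon/2}(v)\bigr)\neq\emptyset$ \emph{and} $D(f_n^n,f^n)<\varepsilon/2$ simultaneously, and pushes a point of $U$ forward into $V$; this tacitly relies on the hitting-time set $\{n: f^n(U)\cap B_{\varepsilon/2}(v)\neq\emptyset\}$ containing arbitrarily large $n$, which is true for transitive maps on perfect spaces but is not justified in the paper. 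You instead invoke the Birkhoff-type consequence of (4) that transitive points form a dense $G_\delta$, pick such a point $x\in U$, and run the (L)-transfer in the reverse direction to conclude that $\{f_n^n(x)\}$ is dense, hence enters $V$. Your route is symmetric with $(3)\Rightarrow(4)$, avoids the unaddressed large-hitting-time issue, and costs only the (standard, already-available) Baire argument; both proofs are of comparable length and use the same hypotheses.
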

\begin{proof}
$(1)\implies (2)$: Let $\{U_i\}_{i=1}^{\infty}$ be a countable base of $X$ and let $A=\{x:\overline{\{f_n^n(x)\}}=X\}$. Then it is easy to see that $x\in A$ if and only if for any $n\in\mathbb{N}$ $x\in \cup_{m=1}^{\infty}f_m^{-m}(U_n)$ which is equivalent to 
$$x\in\bigcap_{n=1}^{\infty}\bigcup_{m=1}^{\infty}f_m^{-m}(U_n)$$ therefore
$$\{x:\overline{\{f_n^n(x)\}}=X\}=\bigcap_{n=1}^{\infty}\bigcup_{m=1}^{\infty}f_m^{-m}(U_n)$$
and the result follows.

$(2)\implies (3)$ Obvious.

$(3)\implies (4)$ Since system $f_{1,\infty}$ satisfies condition (DO) there is $x\in X$ such that $\{f_n^n(x)\}$ is dense in $X$. Fix $y\in X$ and $\varepsilon>0$ then there is $n_0\in\mathbb{N}$ such that $D(f_n^n,f^n)<\varepsilon/2$ for any $n\geq n_0$. Since $X$ is a metric space without isolated point the set $B=B_{\varepsilon}(x)\setminus\{x,f_1^1(x),f_2^2(x),\dots,f_{n_0}^{n_0}(x)\}$ is a nonempty open set, where $B_{\varepsilon}(x)=\{z\in X;d(x,z)<\varepsilon\}$, there is $n_1\geq n_0$ such that $d(f_{n_1}^{n_1}(x),y)<\varepsilon/2$. We get
$$d(f^n(x),y)\leq d(f^n(x),f_n^n(x))+d(f^n_n(x),y)<\varepsilon,$$ 
which shows that $\{f^n(x)\}$ is dense in $X$.\\
Now let $U,V$ be arbitrary nonempty open subset of $X$, then there is $p\in\mathbb{N}$ such that $f^p(x)\in U$. Again from the fact that $X$ is without isolate point we get that the set $W=V\setminus\{x,f(x),f^2(x),\dots,f^p(x)\}$ is a nonempty open set of $X$. Since $\{f^n(x)\}$ is dense in $X$ there is $q>p$ such that $f^q(x)\in W\subset V$, that implies $f^q(x)=f^{q-p}(f^p(x))\in f^{q-p}(U)\cap V$. Hence $f^{q-p}(U)\cap V$ is nonempty and $f$ is topologically transitive.

$(4)\implies (1)$ Let $U,V$ be arbitrary nonempty open subset of $X$. Fix $v\in V$ and $\varepsilon>0$ such that $B_{\varepsilon}(v)\subset V$. Since $f$ is topologically transitive and the system $f_{1,\infty}$ satisfy condition (L) there is $n\in\mathbb{N}$ such that $U\cap f^{-n}(B_{\varepsilon/2}(v))\neq \emptyset$ and $D(f_n^n,f^n)<\varepsilon/2$. Choose $u\in U\cap f^{-n}(B_{\varepsilon/2}(v))$.
$$d(f_n^n(u),v)\leq d(f_n^n(u),f^n(u)+d(f^n(u),v)<\varepsilon$$
Thus $f_n^n(u)\in B_{\varepsilon}(v)\subset V$ and therefore $u\in U\cap f_n^{-n}(V)$ which completes the proof.

\end{proof}
It is well known (see \cite{KS}) that in the case of autonomous dynamical system $(X,f)$ the topological transitivity and existence of dense orbit are mutually equivalent if $X$ is a compact space without isolated point. But even for interval NDS such property is no longer true, in general interval NDS dense orbit does not imply topological transitivity (for example if for any $n\geq 2,\,f_n=f $ where $f$ is a transitive interval map with fixed point $c$ and $f_1|_J=c$, where $J$ is a nontrivial subinterval of $I$). The following lemma shows that even if the system $(X,f_{1,\infty})$ satisfies conditions (L) and (L*) then there is no relation between topological transitivity of fiber maps and the condition (DO) and (DO*).

\begin{lemma}
Let $\mathbb S$ be a circle (with radius $1/2\pi$). There exist $f_n, g_n, f, g \in C(\mathbb S)$ such that $f$ is uniform limit of $f_{1,\infty}$ and $g$ is uniform limit of $g_{1,\infty}$ ,  the condition (L*), (L) are fulfilled and
\begin{enumerate}
\item $f_n$ is transitive, (DO*) and (DO) are not satisfying,
\item $g_n$ is not transitive, (DO*) and (DO) is satisfying.
\end{enumerate}
For both cases, the condition (CC*) is not satisfying and (CC) is satisfying.
\end{lemma}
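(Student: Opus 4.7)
The plan is to realize both examples as sequences of rotations of the circle $\mathbb{S}$, identified with $\mathbb{R}/\mathbb{Z}$. Writing $f_n = R_{\alpha_n}$, the composition rule $R_\theta \circ R_\phi = R_{\theta+\phi}$ reduces every condition to elementary arithmetic of angles: $f_n^k = R_{\alpha_n+\cdots+\alpha_{n+k-1}}$ while $(f_n)^k = R_{k\alpha_n}$, and the supremum metric between two rotations is the circle distance of the difference of their angles from $0$. I will exploit the facts that $R_\theta$ is transitive iff $\theta$ is irrational, and that (CC*) compares $(f_n)^k$ to $f^k$ via the angle $k(\alpha_n-\alpha)$, which becomes large (over varying $k$) whenever $\alpha_n-\alpha$ is irrational.

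For (1), take $\alpha_n = \beta/n^2$ for a fixed irrational $\beta \in (0,1)$, so each $f_n$ is a minimal rotation (hence transitive) and $f_n \to f = \mathrm{id}$ uniformly. The rotation angle of $(f_n)^n$ is $\beta/n$, and that of $f_n^n$ is bounded by $\sum_{i=n}^{2n-1}\beta/i^2 \le \beta/n$; both tend to $0$, giving (L) and (L*) and forcing $(f_n)^n(x_0), f_n^n(x_0) \to x_0$, so neither (DO) nor (DO*) holds. Condition (CC) follows from the uniform tail estimate $\sum_{i\ge n}\beta/i^2 \to 0$. Condition (CC*) fails because, for each fixed $n$, $\{k\alpha_n \bmod 1\}_k$ is dense in $[0,1)$ by irrationality of $\alpha_n$, so $\sup_k D((f_n)^k, f^k) = 1/2$.

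For (2), fix an irrational $\alpha$ and choose rationals $\alpha_n'$ with $|\alpha_n' - \alpha| < 2^{-n}$; set $g_n = R_{\alpha_n'}$ and $\epsilon_n = \alpha_n' - \alpha$. Each $g_n$ has finite order and only finite orbits, so $g_n$ is not transitive, while $g_n \to g := R_\alpha$ uniformly. The angle of $g_n^n$ differs from $n\alpha$ by $\sum_{i=n}^{2n-1}\epsilon_i$ of absolute value at most $2^{1-n}$, and the angle of $(g_n)^n$ differs from $n\alpha$ by $n\epsilon_n$ of absolute value at most $n/2^n$; these bounds give (L) and (L*), and (CC) comes from the tail estimate $\sum_{i\ge n}|\epsilon_i|<2^{1-n}$. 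Since the Weyl-equidistributed sequence $\{x_0 + n\alpha\}$ is dense in $\mathbb{S}$, and both $\{g_n^n(x_0)\}$ and $\{(g_n)^n(x_0)\}$ are obtained from it by null perturbations, both remain dense, so (DO) and (DO*) hold. Finally, $\epsilon_n$ is irrational (rational minus irrational), so as in (1) this forces $\sup_k D((g_n)^k, g^k) = 1/2$ and (CC*) fails.

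The only step that is not a direct computation is the density-under-perturbation claim used for (DO) and (DO*) in part (2): given $y\in\mathbb{S}$ and $\varepsilon>0$, the plan is to first pick $n_0$ so that the perturbation has size $<\varepsilon/2$ for all $n\ge n_0$, then use Weyl density to choose $n\ge n_0$ with $|x_0 + n\alpha - y| < \varepsilon/2$, and conclude by the triangle inequality. This is the main (and only) obstacle; everything else reduces to mechanical angular arithmetic.
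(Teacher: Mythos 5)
Your proposal is correct and follows essentially the same route as the paper: part (1) is a sequence of irrational rotations with angles shrinking to $0$ (so the limit is the identity), and part (2) is a sequence of rational rotations converging to an irrational rotation, with all conditions reduced to arithmetic of rotation angles. Your write-up is in fact more careful than the paper's (which is terse in part (2) and cites \cite{RF} for (DO*)), and your choice of an explicitly irrational angle $\beta/n^2$ avoids the ambiguity in the paper's ``angle $\pi/2^n$'' as to whether the individual rotations are genuinely irrational.
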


\begin{proof} For any $n \in \mathbb N$, let $f_n: \mathbb S \rightarrow \mathbb S$ be a rotation with the angle $\frac{\pi}{2^n}$. Obviously, $f_n \rightrightarrows Id=:f$ and 
$$
d((f_n)^n, f^n)=d((f_n)^n, Id)= \frac{n}{2^n},
$$
$$
d(f_n^n, f^n)=d(f_n^n, Id)= \frac{1}{2^n}+\frac{1}{2^{n+1}}+ \cdots + \frac{1}{2^{2n-1}} < \frac{n}{2^n}.
$$
Then the condition (L*) and (L) are satisfying. For any $x \in \mathbb S$ and $n \in \mathbb N$, $(f_n)^n(x)$ is contained in the same half circle, thus (DO*) is not satisfying whereas $f_n$ is transitive for every $n \in \mathbb N$. 

Let $g: \mathbb S \rightarrow \mathbb S$ be a irrational rotation and $g_n$ be a rational rotation such that $g_n \rightrightarrows f$ and (L*). Since $g$ is transitive, by \cite{RF}, (DO*) is satisfying whereas $g_n$ is not transitive for any $n \in \mathbb N$.
\end{proof}

\bigskip

 Fedeli and Le Donne in \cite{FD} gave the following condition sufficient for transitivity of the limit function. They called the property (CC*)  {\it orbital convergence}.
 
 \begin{theorem}{\label{FD}}
\cite{FD}
 {Let (X,d) be a metric space (need not be neither perfect nor compact), $f_n \in C(X)$, $n\in \mathbb{N}$ such that $f$ is uniform limit of $f_{1,\infty}$, let $f_n$ be topologically transitive and satisfy (CC*). Then $f$ is topologically transitive.}
\end{theorem}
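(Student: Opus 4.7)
The plan is to show that for arbitrary nonempty open sets $U, V \subseteq X$ there is some iterate $k$ with $f^k(U) \cap V \neq \emptyset$, by transferring the transitivity of a single fiber map $f_n$ (for $n$ large) to $f$ via the uniform orbital closeness in (CC*).

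First I would fix nonempty open $U, V \subseteq X$, pick a point $v \in V$, and choose $\varepsilon > 0$ small enough that $B_{\varepsilon}(v) \subseteq V$. I would then invoke (CC*) with the tolerance $\varepsilon/2$ to obtain $n_0 \in \mathbb{N}$ such that for every $n \geq n_0$, every $k \in \mathbb{N}$ and every $x \in X$,
$$ d((f_n)^k(x), f^k(x)) < \varepsilon/2. $$
Fix any $n \geq n_0$. The set $B_{\varepsilon/2}(v)$ is nonempty and open, so by the topological transitivity of $f_n$ there exist $k \in \mathbb{N}$ and a point $x \in U$ with $(f_n)^k(x) \in B_{\varepsilon/2}(v)$.

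Next I would combine the two estimates by the triangle inequality:
$$ d(f^k(x), v) \leq d(f^k(x), (f_n)^k(x)) + d((f_n)^k(x), v) < \varepsilon/2 + \varepsilon/2 = \varepsilon. $$
Hence $f^k(x) \in B_{\varepsilon}(v) \subseteq V$, and since $x \in U$ we conclude $f^k(U) \cap V \neq \emptyset$, as required.

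I do not expect a serious obstacle here: the statement is essentially the observation that (CC*) is exactly the uniformity (in $k$, $x$, and also in $n$ large) needed to transport a single orbit of some $f_n$ into an orbit of $f$ within any prescribed error. The only subtle points are (i) to apply (CC*) with $\varepsilon/2$ rather than $\varepsilon$ so that the triangle inequality gives the right final bound, and (ii) to note that the constant $n_0$ does not depend on $k$, which is precisely the strength of (CC*) compared to ordinary uniform convergence of $f_n$ to $f$. Note that no perfectness or compactness of $X$ is used, matching the generality of the statement.
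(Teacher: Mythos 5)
Your proof is correct: the paper does not reproduce a proof of this theorem (it is quoted from \cite{FD}), but your argument — fixing $v\in V$ with $B_{\varepsilon}(v)\subseteq V$, invoking (CC*) at tolerance $\varepsilon/2$, using transitivity of a single $f_n$ with $n\geq n_0$ to land $(f_n)^k(x)$ in $B_{\varepsilon/2}(v)$ for some $x\in U$, and transferring to $f^k(x)$ by the triangle inequality — is exactly the standard Fedeli--Le Donne argument, and correctly uses only the uniformity of (CC*) in $k$ and $x$, with no appeal to compactness or perfectness. Your observation that $n_0$ is independent of $k$ is indeed the crux, and the same argument gives the paper's Theorem~\ref{CC* upravene}, since you only need transitivity of one $f_n$ with $n\geq\max\{n_0,\text{(the threshold for eventual transitivity)}\}$.
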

 
 Sharma and Raghav in \cite{SR} {
  showed the followig connection between topological transitivity of NDS and its uniform limit:
 
 \begin{theorem}
 {\cite{SR}}
 Let $X$ be a compact metric space, all functions are supposed to be surjective. Let $f_n \in C(X)$ are feebly open $\forall n\in\mathbb N$, $f$ is uniform limit of $f_{1,\infty}$ and the conditon (CC) is fulfilled. Then $f$ is transitive if and only if $f_{1,\infty}$ is transitive. 
 \end{theorem}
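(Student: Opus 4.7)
The plan is to prove both implications by using (CC) as a bridge between the autonomous iterates $f^k$ and the NDS iterates $f_1^m$, combined with the decomposition $f_1^{n_0-1+k}=f_{n_0}^k\circ f_1^{n_0-1}$ that peels off the first $n_0-1$ maps. The role of feeble openness is asymmetric: in one direction it ensures that $f_1^{n_0-1}(U)$ has nonempty interior, while in the other, mere continuity and surjectivity of $f_1^{n_0-1}$ suffice to make $(f_1^{n_0-1})^{-1}(U)$ a nonempty open set.

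For the direction ``$f$ transitive $\implies(X,f_{1,\infty})$ transitive,'' I would fix open $U,V\subset X$, pick $v\in V$ and $\varepsilon>0$ with $B_\varepsilon(v)\subset V$, and use (CC) to select $n_0$ so that $d(f_{n_0}^k(x),f^k(x))<\varepsilon/2$ for all $k\in\mathbb{N}$ and $x\in X$. Since a composition of feebly open maps is feebly open, $W:=\mathrm{int}(f_1^{n_0-1}(U))$ is a nonempty open set. Transitivity of $f$ then yields $k\in\mathbb{N}$ and $y_0\in W$ with $f^k(y_0)\in B_{\varepsilon/2}(v)$. Writing $y_0=f_1^{n_0-1}(u)$ for some $u\in U$, (CC) gives $f_{n_0}^k(y_0)=f_1^{n_0-1+k}(u)\in B_\varepsilon(v)\subset V$, which is the required transitivity.

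The reverse direction is symmetric in spirit. With the same $\varepsilon$ and $n_0$, I would set $U':=(f_1^{n_0-1})^{-1}(U)$, which is open by continuity and nonempty by surjectivity of $f_1^{n_0-1}$. NDS transitivity applied to $U'$ and $B_{\varepsilon/2}(v)$ produces $n$ with $f_1^n(U')\cap B_{\varepsilon/2}(v)\neq\emptyset$; picking $u'$ in this intersection and setting $u:=f_1^{n_0-1}(u')\in U$, we obtain $f_1^n(u')=f_{n_0}^{n-n_0+1}(u)\in B_{\varepsilon/2}(v)$, after which (CC) delivers $f^{n-n_0+1}(u)\in V$, establishing transitivity of $f$.

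The one wrinkle, and the step I would expect to require the most care, is that the decomposition above needs $n\ge n_0$, whereas the bare definition of NDS transitivity only guarantees some $n\in\mathbb{N}$. On a compact metric space with no isolated point (the natural setting here, since transitivity otherwise forces degenerate behavior), one recovers arbitrarily large $n$ by a short contradiction: if only finitely many indices $n_1<\cdots<n_\ell$ gave a nonempty intersection, then removing the corresponding hit points from $B_{\varepsilon/2}(v)$ would leave a nonempty open set on which NDS transitivity would yield yet another index. Once this strengthening is in hand, the rest of the argument is a routine combination of (CC) with the triangle inequality.
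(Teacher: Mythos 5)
The paper states this theorem as a citation from \cite{SR} and gives no proof of it, so there is no in-paper argument to compare yours against; judged on its own merits, your first implication is sound but the second has a genuine gap. The direction ``$f$ transitive $\implies (X,f_{1,\infty})$ transitive'' works as you describe: a composition of feebly open maps is feebly open, so $W=\mathrm{int}\bigl(f_1^{n_0-1}(U)\bigr)$ is a nonempty open set, and the triangle inequality together with (CC) applied at index $n_0$ closes the argument.

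The gap is precisely at the ``wrinkle'' you flag in the converse. You need an index $n\ge n_0$ with $f_1^n(U')\cap B_{\varepsilon/2}(v)\neq\emptyset$, but NDS transitivity only supplies some $n$, and your proposed repair does not produce a larger one. If you delete one ``hit point'' per index $n_1,\dots,n_\ell$ from $B_{\varepsilon/2}(v)$, a fresh application of transitivity to the shrunken target may simply return one of the same indices $n_i$ with a different witness, since $f_1^{n_i}(U')\cap B_{\varepsilon/2}(v)$ is in general an infinite set; if instead you delete the entire sets $f_1^{n_i}(U')\cap B_{\varepsilon/2}(v)$, the remainder need not be open, or nonempty. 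The classical fact that hitting-time sets of a transitive autonomous map on a perfect space are infinite (Kolyada--Snoha, \cite{KS}) rests on the semigroup identity $f^{m+n}=f^m\circ f^n$, which lets one restart the dynamics from a later time; for a nonautonomous system $f_1^m\bigl(f_1^n(U)\bigr)\neq f_1^{m+n}(U)$, so that bootstrap is unavailable, and it is not at all clear that NDS transitivity forces unbounded hitting times even with feeble openness and surjectivity in hand. (You also quietly import the hypothesis that $X$ has no isolated point, which is not part of the statement.) To complete the proof you need an actual argument that, under (CC), surjectivity and feeble openness, the hitting times of the nonautonomous system are unbounded, or else a different route to the converse that avoids requiring $n\ge n_0$ altogether.
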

 The following theorem is a slight generalization of Theorem \ref{FD}. We omit the proof since it is analogous to the proof of original Theorem.
  \begin{theorem}\label{CC* upravene}
 Let $X$ be a compact metric space, all functions are supposed to be surjective. Let $f_n \in C(X)$, $n\in \mathbb{N}$, $f$ is uniform limit of $f_{1,\infty}$, the conditon (CC*) is fulfilled, and $\exists n_0$ such that,  $\forall n\geq n_0$, $f_n$ is transitive.  Then $(X, f)$ is transitive.
 \end{theorem}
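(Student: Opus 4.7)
The plan is to mimic the Fedeli--Le Donne proof of Theorem \ref{FD}, with the sole modification that we now extract a single index $n$ satisfying two tail conditions simultaneously: transitivity of $f_n$ (which is available for $n\geq n_0$ by hypothesis) and uniform closeness of the iterates $(f_n)^k$ to $f^k$ (which is available for large $n$ by (CC*)). Since both are tail conditions, intersecting them is automatic and the ``eventually transitive'' generalization costs nothing.

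More concretely, I would fix arbitrary nonempty open sets $U,V\subset X$, choose $v\in V$, and pick $\varepsilon>0$ with $B_{2\varepsilon}(v)\subset V$. By (CC*), there exists $n_1\in\mathbb{N}$ such that for every $n\geq n_1$, every $k\in\mathbb{N}$, and every $x\in X$,
$$d\bigl((f_n)^k(x),f^k(x)\bigr)<\varepsilon.$$
Now pick any single $n\geq\max\{n_0,n_1\}$. By hypothesis $f_n$ is transitive, so there exist $k\in\mathbb{N}$ and $x\in U$ with $(f_n)^k(x)\in B_{\varepsilon}(v)$.

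Combining the two estimates via the triangle inequality gives
$$d\bigl(f^k(x),v\bigr)\leq d\bigl(f^k(x),(f_n)^k(x)\bigr)+d\bigl((f_n)^k(x),v\bigr)<2\varepsilon,$$
so $f^k(x)\in B_{2\varepsilon}(v)\subset V$. Since $x\in U$, this shows $f^k(U)\cap V\neq\emptyset$, proving transitivity of $f$.

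I do not anticipate any serious obstacle: the argument is a one-shot triangle inequality, and the only subtlety is making sure the chosen index $n$ lies in both tails, which requires nothing more than taking $n\geq\max\{n_0,n_1\}$. The surjectivity hypothesis and compactness of $X$ are not used in the argument itself (they are inherited from the context of Theorem \ref{FD}), and no assumption on $X$ being perfect is needed since we never have to remove finitely many orbit points.
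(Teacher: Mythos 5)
Your proof is correct and is exactly the argument the paper has in mind: the paper omits the proof of Theorem \ref{CC* upravene} precisely because it is the Fedeli--Le Donne argument for Theorem \ref{FD} with the single extra step of taking $n\geq\max\{n_0,n_1\}$ so that transitivity of $f_n$ and the (CC*) estimate hold simultaneously, which is what you do. No gaps.
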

 
As is showed in the following Lemma, Theorem \ref{CC* upravene} can not be improved to the equivalence form.
\begin{lemma}  Let $X=\{0,1\}^{\mathbb N}$ be the Cantor set equipped with metric \\
$\rho(x_1x_2\cdots, y_1y_2\cdots)= 1/n$ where $n$ is the minimal index such that $x_n\ne y_n$. Let $f\in\mathcal C(X)$ be the standard adding machine {\rm mod} 2 (i.e. $f(x_1x_2x_3\dots)=((x_1x_2x_3\dots)+(100\dots))\mod 2$ and the addition of 1 is carried to next coordinate if $x_i+1=2$) hence a transitive map. Then there is a nonautonomous system $f_{1,\infty}\in \mathcal C(X)$ converging uniformly to $f$ such that every point of $f_n$ is periodic and
\begin{equation}
\label{12}
\rho((f_n)^k(x),f^k(x) )\le \frac 1{n+1}, \ k, n\in\mathbb N,\, x\in X.
\end{equation} 
Consequently, on $X$, condition $(CC^*)$ and uniform convergence of a non-autonomous system to a transitive map does not imply transitivity of maps in the nonautonomous system.\\
\end{lemma}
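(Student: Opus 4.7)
The plan is to construct each $f_n$ as a ``truncated adding machine'' that performs the carry only inside the first $n$ coordinates and leaves the tail untouched. Concretely, I would define
\[
f_n(x_1 x_2 \cdots x_n x_{n+1} x_{n+2}\cdots) \;=\; (y_1 y_2 \cdots y_n) \, x_{n+1} x_{n+2}\cdots,
\]
where $(y_1,\dots,y_n)$ is obtained from $(x_1,\dots,x_n)$ by adding $1$ in the adding-machine way but discarding any carry produced at position $n+1$. Continuity is immediate since $f_n$ depends only on the first $n$ coordinates and the tail is copied verbatim.

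The first key observation is that $f_n$ is periodic on $X$ with period dividing $2^n$: applying $f_n$ a total of $2^n$ times cycles the first $n$ coordinates through all binary strings of length $n$ and returns to the starting value, while the tail never changes. Hence every point of $X$ is $f_n$-periodic, and in particular no $f_n$ is transitive (each orbit is finite whereas $X$ is uncountable).

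The second, and main, step is the comparison with $f^k$. I would show by induction on $k$ (or by directly interpreting the adding machine as $+1$ on the $2$-adic integers) that the first $n$ coordinates of $f^k(x)$ depend only on the first $n$ coordinates of $x$, and agree with the first $n$ coordinates of $(f_n)^k(x)$, because the action of $f$ on those coordinates coincides with adding $1$ modulo $2^n$, which is exactly what $f_n$ does. Since $(f_n)^k(x)$ and $f^k(x)$ therefore agree at least on positions $1,\dots,n$, the definition of $\rho$ yields
\[
\rho\bigl((f_n)^k(x),\,f^k(x)\bigr) \le \frac{1}{n+1} \quad \text{for all }k\in\mathbb N,\, x\in X,
\]
which is (\ref{12}). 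Specializing to $k=1$ gives $D(f_n,f)\le 1/(n+1)\to 0$, so $f_n\rightrightarrows f$; and the uniform bound $1/(n+1)$ in $k$ and $x$ is precisely condition (CC*).

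Putting everything together, $f_{1,\infty}$ converges uniformly to the transitive map $f$, satisfies (CC*), and yet every $f_n$ fails to be transitive (indeed is pointwise periodic), proving the ``consequently'' clause. I do not expect a real obstacle; the only delicate point is the bookkeeping argument that the carry in $f^k$ never alters the first $n$ coordinates beyond what $f_n^k$ already does, which is the reason for choosing the coordinate-wise truncation rather than some other finite-state approximation.
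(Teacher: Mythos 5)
Your construction is exactly the one the paper uses: $f_n$ acts on the first $n$ coordinates as the adding machine mod $2^n$ (discarding the carry) and copies the tail, which forces $(f_n)^k$ and $f^k$ to agree on the first $n$ coordinates and gives the bound $1/(n+1)$, pointwise periodicity, and hence non-transitivity of each $f_n$. The proposal is correct and follows essentially the same approach as the paper, with somewhat more of the verification written out.
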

\begin{proof} It suffices to let $f_n (x_1x_2\cdots x_nx_{n+1}\cdots) =x^*_1x^*_2\cdots x^*_nx_{n+1}x_{n+2}\cdots)$ where $x^*_1x^*_2\cdots x^*_n$ 
denotes  the first $n$ symbols of $f(x_1x_2\cdots x_nx_{n+1}\cdots)$. So, $f_n$ acts on the first $n$ symbols as the adding machine, letting the remaining symbols unchanged.
\end{proof}
Recall that autonomous dynamical system $(X,f)$ has \textit{sensitive dependence on initial conditions} (is sensitive, for short) if there is $\delta>0$ such that for any $x\in X$ and $\varepsilon>0$ there is $y\in X$ with $d(x,y)<\varepsilon$ such that $d(f^n(x),f^n(y))>\delta$ for some $n\geq 0$.

Although, in general the converse implication of Theorem \ref{CC* upravene} is not true, for interval map we get the following result.
\begin{theorem}
Let f, $f_n \in C(I)$, $n\in \mathbb{N}$ be surjective maps such that f is uniform limit of $f_{1,\infty}$. If $f^2$ is transitive and $f_{1,\infty}$ satisfies condition (CC*), then for every $\varepsilon>0$ there is $N\in\mathbb{N}$ such that for any $n\geq N$ there is $J_n\supset (\varepsilon,1-\varepsilon)$ such that $f_n(J_n)=J_n$ and $f|_{J_n}$ is transitive.
\end{theorem}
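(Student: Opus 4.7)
Plan: I will read the conclusion ``$f|_{J_n}$ is transitive'' as ``$f_n|_{J_n}$ is transitive'', since the literal reading is essentially trivial --- for the limit $f$ transitive on $I$ the only closed $f$-invariant subinterval with non-empty interior is $I$ itself. The strategy is to exploit that $f^2$ transitive on the compact interval $I=[0,1]$ forces $f$ to be locally eventually onto (leo), transfer this property to each $f_n$ (for $n$ large) by means of (CC*), and thereby build a large $f_n$-invariant subinterval on which $f_n$ is transitive.

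First I would quantify leo for $f$. Since $f^2$ is transitive on $I$, $f$ cannot have the two-lap ``swap'' structure of a transitive but not totally-transitive interval map, so $f$ is totally transitive and hence leo. A standard compactness argument then upgrades this to a uniform form: for every $\delta>0$ there is $K(\delta)\in\mathbb N$ such that $f^k(V)=I$ for every subinterval $V\subset I$ of length at least $\delta$ and every $k\ge K(\delta)$ (the extension to all such $k$ using $f(I)=I$).

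Next, fix $\varepsilon\in(0,1/2)$ (for $\varepsilon\ge 1/2$ the claim is trivial), put $V_0=[\varepsilon,1-\varepsilon]$, take $K=K(1-2\varepsilon)$, and by (CC*) choose $N$ so that $d((f_n)^k(x),f^k(x))<\varepsilon$ for all $n\ge N$, $k\ge 1$, $x\in I$. For every $k\ge K$ the interval $(f_n)^k(V_0)$ lies at Hausdorff distance less than $\varepsilon$ from $f^k(V_0)=I$, so its extreme values sit in $[0,\varepsilon)$ and $(1-\varepsilon,1]$, giving $(f_n)^k(V_0)\supset V_0$. I then set
\[
C_n:=\bigcup_{k\ge K}(f_n)^k(V_0),\qquad J_n:=\overline{C_n}.
\]
Since every term in the union contains the common interval $V_0$, $C_n$ is itself a single interval, so $J_n$ is a closed subinterval of $I$ with $J_n\supset V_0\supset(\varepsilon,1-\varepsilon)$. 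Applying $(f_n)^K$ to the inclusion $V_0\subset(f_n)^K(V_0)$ yields the nested chain $(f_n)^{jK}(V_0)\subset(f_n)^{(j+1)K}(V_0)$, hence $(f_n)^K(V_0)\subset(f_n)^{2K}(V_0)\subset f_n(C_n)$; this upgrades the trivial inclusion $f_n(C_n)\subset C_n$ to the equality $f_n(C_n)=C_n$, and continuity of $f_n$ on the compact interval $J_n$ yields $f_n(J_n)=J_n$.

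Finally, for transitivity of $f_n|_{J_n}$: given non-empty open $U,W\subset J_n$, each contains an open subinterval of $I$ of positive length. Quantitative leo and (CC*) applied to the subinterval inside $U$ give some $k_U$ with $(f_n)^{k_U}(U)\supset V_0$. Density of $C_n$ in $J_n$ produces $k'\ge K$ and $v\in V_0$ with $(f_n)^{k'}(v)\in W$, whence $(f_n)^{k_U+k'}(U)\supset(f_n)^{k'}(V_0)\ni(f_n)^{k'}(v)\in W$. The delicate step is the construction of $J_n$ in the previous paragraph: having $C_n$ be a single interval hinges on the uniform containment $(f_n)^k(V_0)\supset V_0$ delivered by (CC*), and the equality $f_n(C_n)=C_n$ uses the monotone nesting of the subfamily at multiples of $K$; once $J_n$ is in place the transitivity step is essentially mechanical.
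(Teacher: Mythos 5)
Your reading of the conclusion as ``$f_n|_{J_n}$ is transitive'' agrees with what the paper's own proof actually establishes, so that is the right interpretation. Your route, however, is genuinely different from the paper's. The paper argues via sensitivity: transitivity of $f$ gives $\delta$-sensitivity, (CC*) transfers $\delta'$-sensitivity to $f_n$ for all large $n$, and the structural theorem for sensitive interval maps (cited from Ruette) produces a cycle of intervals $J_1^n,\dots,J_p^n$ on whose union $f_n$ is transitive; the hypothesis that $f^2$ is transitive is then used only to rule out $p>1$ by a covering-versus-length contradiction (if $p>1$ some $J_i^n$ has length at most $1/2$, yet by (CC*) it must absorb $[\varepsilon,1-\varepsilon]$). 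You instead build the invariant interval by hand as the closure of $\bigcup_{k\ge K}(f_n)^k(V_0)$ and verify invariance and transitivity directly. This is more elementary in that it avoids the classification of sensitive interval maps, and it has the advantage of explicitly delivering the containment $J_n\supset(\varepsilon,1-\varepsilon)$, which the paper's written proof leaves implicit after the period-one reduction.

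There is one incorrect step: ``$f$ is totally transitive and hence leo'' is false for general interval maps. From $f^2$ transitive you do get total transitivity, hence topological mixing, but mixing does not imply locally eventually onto (a mixing map with a fixed endpoint that has no other preimage is never exact on intervals avoiding that endpoint). Your construction uses leo essentially: you need the extreme values of $(f_n)^k(V_0)$ to land in $[0,\varepsilon)$ and $(1-\varepsilon,1]$, which requires $f^k(V_0)=I$ rather than merely $f^k(V_0)\supseteq[\eta,1-\eta]$. The repair is routine: use the correct uniform covering property of mixing interval maps, namely that for every $\delta>0$ and $\eta>0$ there is $K$ with $f^k(V)\supseteq[\eta,1-\eta]$ for every subinterval $V$ of length at least $\delta$ and every $k\ge K$; take $\eta=\varepsilon/2$ and tighten the (CC*) tolerance to $\varepsilon/2$, so that $(f_n)^k(V_0)\supseteq[\varepsilon,1-\varepsilon]=V_0$ still holds, and the rest of your argument (the nesting at multiples of $K$, the equality $f_n(C_n)=C_n$, and the transitivity step) goes through unchanged.
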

\begin{proof}
First observe that since $f$ is transitive there is $\delta >0$ such that $f$ is $\delta$-sensitive. Then it is easy to see that from the condition (CC*) we get that there is $n_0\in\mathbb{N}$ and $\delta '>0$ such that for any $n\geq n_0$ $f_n$ is $\delta '$-sensitive. Fix $0<\varepsilon<1/4$, then there is $n_1\in\mathbb{N}$ such that for any $n\geq n_1$ 
\begin{equation}\label{CC*1}
D((f_n)^k,f^k)<\frac{\varepsilon}{2} 
\end{equation} 
Set $N=\max\{n_0,n_1\}$. 
Then for any $n\geq N$ there is a cycle of intervals $J_1^n,J_2^n,\dots,J_p^n$, i.e. $f_n(J_i^n)=J_{i+1}^n$ for $1\leq i< n$ and $f_n(J_p^n)=J_1^n$, such that $f_n|_{J_1^n\cup J_2^n\cup\dots\cup J_p^n}$ is transitive (see \cite{RU} for more details).

Fix $n\geq N$ and assume that $p>1$, then there is $i\leq p$ such that $|J_i^n|\leq 1/2$. Since $f$ is transitive there is $n_2\in\mathbb{N}$ such that for any $m\geq n_2$ $f^m(int\, J_i^n) \supseteq [\varepsilon/2,1-\varepsilon/2]$. For any $m=kp>n_2$ by (\ref{CC*1}) we get that 
$$[\varepsilon,1-\varepsilon]\subseteq (f_n)^m(int\,J_i^n)\subseteq J_i^n$$
which is a contradiction.
\end{proof}
Recall that if interval map $f$ is transitive but $f^2$ is not then there is a unique fixed point $u\in(0,1)$ such that $f([0,u])=[u,1]$ and $f([u,1])=[0,u]$. Moreover $f^2|_{[0,u]}$ and $f^2|_{[u,1]}$ are transitive. The proof of the following corollary of foregoing theorem is straightforward therefore we omit it.
\begin{corollary}

Let f, $f_n \in C(I)$, $n\in \mathbb{N}$ be surjective maps such that f is uniform limit of $f_{1,\infty}$. If $f$ is transitive with unique fixed point $u$ such that $f^2$ is not transitive and $f_{1,\infty}$ satisfies condition (CC*), then for every $\varepsilon>0$ there is $N\in\mathbb{N}$ such that for any $n\geq N$ there is $J_n\supset (\varepsilon,u-\varepsilon)$ and $K_n\supset (u+\varepsilon,1-\varepsilon)$ such that $f_n(J_n)=J_n,\,f_n(K_n)=K_n$ and $f|_{J_n},\,f|_{K_n}$ are transitive.
\end{corollary}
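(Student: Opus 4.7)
The plan is to reduce the corollary to Theorem~\ref{CC* upravene} applied to the squared system. By the standard decomposition theorem for transitive interval maps, the hypothesis ``$f$ transitive with unique fixed point $u$ and $f^2$ not transitive'' forces $f([0,u])=[u,1]$, $f([u,1])=[0,u]$, and both $f^2|_{[0,u]}$ and $f^2|_{[u,1]}$ are totally transitive; in particular their squares $f^4|_{[0,u]}$ and $f^4|_{[u,1]}$ are transitive on the respective subintervals.

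Next I would consider the nonautonomous system $\tilde f_n:=(f_n)^2$. It converges uniformly to $f^2$, and condition $(\mathrm{CC}^*)$ for $f_{1,\infty}$ carries over to $\tilde f_{1,\infty}$, because $(\tilde f_n)^k=(f_n)^{2k}$ and $(f^2)^k=f^{2k}$, so the quantities entering $(\mathrm{CC}^*)$ for $\tilde f_{1,\infty}$ form a subsequence of those already controlled. Since $f^2$ preserves each of $[0,u]$ and $[u,1]$ and $\tilde f_n\rightrightarrows f^2$, for large $n$ the map $\tilde f_n$ sends each half into an arbitrarily small neighbourhood of itself; a standard retraction at the fixed point $u$ yields nonautonomous systems $\tilde f_n^L\colon[0,u]\to[0,u]$ and $\tilde f_n^R\colon[u,1]\to[u,1]$ that still converge uniformly to $f^2|_{[0,u]}$ and $f^2|_{[u,1]}$ and still satisfy $(\mathrm{CC}^*)$.

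Applying Theorem~\ref{CC* upravene} to each restricted system (its hypothesis, transitivity of the square of the limit, is exactly what step one provides) gives, for every $\varepsilon>0$, an integer $N$ such that for every $n\ge N$ there is $J_n\supset(\varepsilon,u-\varepsilon)$ invariant under $\tilde f_n^L$ with $\tilde f_n^L|_{J_n}$ transitive, and similarly $K_n\supset(u+\varepsilon,1-\varepsilon)$ for $\tilde f_n^R$. Undoing the retraction, with the boundary correction absorbed into a slightly enlarged $\varepsilon$, yields the invariance and transitivity claimed in the corollary. The main obstacle is justifying the boundary retraction without disturbing uniform convergence or $(\mathrm{CC}^*)$; a cleaner alternative is to rerun the proof of Theorem~\ref{CC* upravene} on $f_n$ itself, extract its transitive cycle of intervals $J_1^n,\ldots,J_p^n$, observe that $p$ must be even because $f$ permutes $[0,u]$ and $[u,1]$, and rule out $p\ge 4$ by noting that two disjoint members of the cycle lying inside $[0,u]$ would each have to contain $(\varepsilon,u-\varepsilon)$, which is impossible; so $p=2$ and the required $J_n,K_n$ are just the two intervals of the cycle.
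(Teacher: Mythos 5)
The paper gives no proof of this corollary: it is explicitly omitted as a ``straightforward'' consequence of the \emph{unnumbered} theorem immediately preceding it (hypothesis ``$f^2$ transitive'', conclusion the invariant interval $J_n\supset(\varepsilon,1-\varepsilon)$). Note that this is not Theorem~\ref{CC* upravene}, whose content (transitivity of the $f_n$ implies transitivity of $f$) goes in the opposite direction; your references to Theorem~\ref{CC* upravene} in fact describe that unnumbered theorem. Your second, ``cleaner alternative'' --- rerun that theorem's proof for $f_n$, extract the transitive cycle $J_1^n,\dots,J_p^n$, use the fact that $f_n$ approximately exchanges $[0,u]$ and $[u,1]$ to force $p$ even, and kill $p\ge 4$ by the same covering argument run with $f^2|_{[0,u]}$ in place of $f^2$ --- is surely the intended argument and is sound in outline. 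Two details need care: the covering step requires $f^2|_{[0,u]}$ and $f^2|_{[u,1]}$ to be totally transitive (in fact mixing, by the strong form of the decomposition theorem), not merely transitive as the paper states, so that $f^{2m}(\mathrm{int}\,J_i^n)\supset[\varepsilon/2,u-\varepsilon/2]$ holds along the arithmetic progression of times divisible by $p$; and you must dispose of cycle members straddling $u$ (if one member contains a two-sided neighbourhood of $u$ of definite size then so do all of them, which by disjointness of interiors collapses the cycle).

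Your primary route (squaring the system and restricting to the two halves) is weaker: the retraction at $u$ that you yourself flag is a genuine unresolved obstruction. Since $(f_n)^2$ does not preserve $[0,u]$, the retracted maps are no longer iterates of a single map, so $(\mathrm{CC}^*)$ for the retracted system does not follow from $(\tilde f_n)^k=(f_n)^{2k}$, and surjectivity onto $[0,u]$ (needed as a hypothesis) may fail; nothing in the proposal repairs this. Finally, observe that what either route actually produces is a $2$-cycle $f_n(J_n)=K_n$, $f_n(K_n)=J_n$ with $(f_n)^2$ transitive on each piece, which is not literally the stated conclusion $f_n(J_n)=J_n$, $f_n(K_n)=K_n$: since $f_n$ is uniformly close to $f$ and $f$ exchanges the two halves, an $f_n$-invariant interval containing $(\varepsilon,u-\varepsilon)$ would have to contain most of $[u,1]$ as well. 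Your closing sentence glosses over this discrepancy; you should either prove the $2$-cycle version explicitly or justify reading the statement that way.
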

For piecewise linear maps of constant slope we get even stronger result.
\begin{theorem}\label{main1}
Let f, $f_n \in C(I)$, $n\in \mathbb{N}$ be surjective maps such that f is uniform limit of $f_{1,\infty}$. If f is transitive, piecewise linear map of constant slope $k$ and $f_{1,\infty}$ satisfies condition (CC*), then there is $n_0\in\mathbb{N}$ such that for every $n\geq n_0$ $f_n(x)=f(x)$.
\end{theorem}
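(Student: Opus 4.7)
My plan is to argue by contradiction: assume $f_n \neq f$ for infinitely many $n$ and set $\psi_n := f_n - f$, so $\psi_n \not\equiv 0$; by (CC*) at iterate index $1$ we still have $\|\psi_n\|_\infty \to 0$. The key geometric ingredient is the one-step expansion estimate available because $f$ has constant slope $k$: whenever $y, z$ lie in a common monotone piece of $f$, $|f(y) - f(z)| = k|y-z|$. Consequently, for paired orbits $y_j := (f_n)^j(x)$ and $z_j := f^j(x)$ sharing a monotone piece at step $j$,
\[
|y_{j+1} - z_{j+1}| \;\ge\; k\,|y_j - z_j| - \|\psi_n\|_\infty.
\]

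I first treat $k > 2$. Since $f$ is transitive, its periodic points are dense, and removing the countably many periodic points whose $f$-orbit meets the (finite) break-point set still leaves a dense subset of $I$. Fix such a periodic point $p$ with finite orbit $\mathcal O(p)$ and set $\eta := d(\mathcal O(p), \{\text{break points}\}) > 0$. For $n$ large, (CC*) with $\varepsilon < \eta/2$ forces $y_j$ and $z_j$ to remain in a common monotone piece at every step, so the recurrence applies. Unrolling with $a_0 = 0$ and $a_1 = |\psi_n(p)|$ yields
\[
a_j \;\ge\; k^{j-1}|\psi_n(p)| - \|\psi_n\|_\infty \cdot \tfrac{k^{j-1}-1}{k-1}.
\]
If $|\psi_n(p)| > \|\psi_n\|_\infty/(k-1)$ the right side diverges in $j$, contradicting $a_j < \eta/2$; hence $|\psi_n(p)| \le \|\psi_n\|_\infty/(k-1)$ at every such $p$. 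By density and continuity we obtain $\|\psi_n\|_\infty \le \|\psi_n\|_\infty/(k-1)$, and since $k > 2$ this forces $\|\psi_n\|_\infty = 0$, i.e., $f_n = f$.

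For arbitrary $k > 1$, choose $M$ with $k^M > 2$. Then $f^M$ is piecewise linear of constant slope $k^M$, inherits dense periodic points from $f$, and has only finitely many break points; moreover, (CC*) for $(f, f_n)$ immediately implies the analogous condition for $(f^M, (f_n)^M)$ because $((f_n)^M)^j = (f_n)^{Mj}$. Applying the $k > 2$ argument to $(f^M, (f_n)^M)$ gives $(f_n)^M = f^M$ for all large $n$; repeating with $M+1$ in place of $M$ (still $k^{M+1} > 2$) gives $(f_n)^{M+1} = f^{M+1}$. Then
\[
f_n \circ f^M \;=\; f_n \circ (f_n)^M \;=\; (f_n)^{M+1} \;=\; f^{M+1} \;=\; f \circ f^M,
\]
and surjectivity of $f^M$ (inherited from $f$) implies $f_n(y) = f(y)$ for every $y \in I$, contradicting our assumption.

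The main obstacle throughout is controlling the common-branch condition: ensuring $y_j$ and $z_j$ remain in a single monotone piece of $f$ (respectively $f^M$) at every step. This is handled by committing up front to a periodic orbit uniformly bounded away from the break points, so that the bound supplied by (CC*) stays below the separation. The key density fact used repeatedly is that periodic points of a transitive interval map are dense, and excising the countable set of periodic orbits that meet the finite break-point set preserves density.
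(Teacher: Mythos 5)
Your expansion recurrence and the reduction from general slope $k>1$ to $k^M>2$ via $f^M$ are both sound, and the final step extracting $f_n=f$ from $(f_n)^M=f^M$ and $(f_n)^{M+1}=f^{M+1}$ using surjectivity of $f^M$ is a nice touch. However, there is a genuine gap at the step ``by density and continuity we obtain $\|\psi_n\|_\infty\le\|\psi_n\|_\infty/(k-1)$.'' The inequality $|\psi_n(p)|\le\|\psi_n\|_\infty/(k-1)$ is established for a pair $(p,n)$ only when $n$ is large enough \emph{depending on} $p$, namely when the uniform bound $\delta_n:=\sup_{j,x}d((f_n)^j(x),f^j(x))$ supplied by (CC*) is smaller than $\eta(p)/2$, where $\eta(p)=d(\mathcal O(p),C)$ and $C$ is the break-point set. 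To conclude the sup bound for a \emph{fixed} $n$ you need the pointwise bound on a set of $p$'s that is dense (or at least accumulates at a maximizer of $|\psi_n|$) \emph{for that fixed $n$}. But the periodic points usable at level $n$ all satisfy $\eta(p)>2\delta_n$, hence lie in the set $\Lambda_n=\{x: d(f^j(x),C)>2\delta_n\ \forall j\ge 0\}$ of points whose entire forward orbit avoids a fixed neighbourhood of $C$. Since $f$ is transitive, $\Lambda_n$ is a closed set with empty interior (if it contained an interval $J$, no iterate of $J$ would meet that neighbourhood of $C$, contradicting transitivity), so it is nowhere dense. Thus for each fixed $n$ the bound is known only on a nowhere dense set, and $|\psi_n|$ may well attain its maximum away from it (e.g., at a point with dense orbit, or near a break point). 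Density of the good periodic points only emerges in the limit $n\to\infty$, and without equicontinuity of $\psi_n/\|\psi_n\|_\infty$ no diagonal argument rescues the conclusion. (A smaller imprecision: removing a countable set from a countable dense set of periodic points need not preserve density; what saves you is that a periodic orbit meeting $C$ forces the relevant break point itself to be periodic, so only finitely many periodic points are actually discarded.)

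For comparison, the paper avoids exactly this quantifier trap: its pointwise lemma (agreement of $f_n$ and $f$ at preimages of a fixed point) is invoked only at \emph{finitely many} anchor points $a_i,b_i$ per lap, yielding one threshold $n_0$, after which a structural induction over $f^{-k}(p)$ propagates the equality --- using that a piecewise linear map has only finitely many preimages of any point (Case I) and that a disagreement near a lap endpoint forces a jump of size at least $k\varepsilon_{c_i}$, contradicting (CC*) (Cases II--III). If you want to salvage your approach, you would need an analogous mechanism that, for a single fixed large $n$, transports the bound from the nowhere dense set $\Lambda_n$ to all of $I$; as written, that mechanism is missing.
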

We first show the following results which are used to prove the theorem. Note that in the following lemma the set of all fixed points of a function $f$ is denoted by $Fix(f)$.
\begin{lemma}\label{fix}
Let f, $f_n \in C(I)$, $n\in \mathbb{N}$ be surjective maps such that f is uniform limit of $f_{1,\infty}$. If f is transitive, piecewise monotone map, and $f_{1,\infty}$ satisfies condition (CC*), then there is $n_0\in \mathbb{N}$ such that for any $n\geq n_0$ $Fix(f)\subset Fix(f_n)$ and  $f^{-j}(Fix(f))=(f_n)^{-j}(Fix(f))$ for any natural number $j$.
\end{lemma}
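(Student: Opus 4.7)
The plan is to exploit the topological repelling structure of fixed points of a transitive piecewise monotone interval map and to use condition (CC*) to transfer this rigidity to the nonautonomous orbits. Since $f$ is piecewise monotone and transitive, $Fix(f)$ is a finite set. The central intermediate property I would prove is that each $p\in Fix(f)$ admits an \emph{isolating neighborhood} $V_p$: an open neighborhood of $p$ in $I$ such that the only $z\in V_p$ with $\{f^k(z):k\ge 0\}\subseteq V_p$ is $z=p$.

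To establish the isolating property I would use the standard fact that a topologically transitive map on the connected interval $I$ admits no attracting (even one-sided) fixed point: if $f$ were attracting from one side of $p$, then some half-open interval $(p-\delta,p)$ or $(p,p+\delta)$ would satisfy $f(\cdot)\subseteq(\cdot)$ and thus be forward invariant, contradicting transitivity against any open set disjoint from this invariant interval. Hence on every monotone piece adjacent to $p$ the map pushes nearby points away from $p$, and any orbit that remained in a sufficiently small $V_p$ would, by monotonicity on such a piece, have to converge monotonically to a fixed point inside $V_p$; since $p$ is isolated in $Fix(f)$, that fixed point must be $p$ itself and thus the orbit is $\{p\}$. (Alternatively, one may invoke Parry's theorem that $f$ is topologically conjugate to a piecewise linear map of constant slope $>1$, for which the isolating property is transparent.)

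Now fix pairwise disjoint isolating neighborhoods $V_p$ for $p\in Fix(f)$ and choose $\varepsilon>0$ so small that $B_{2\varepsilon}(p)\subseteq V_p$ for every $p$. By (CC*), pick $n_0$ such that $d((f_n)^k(x),f^k(x))<\varepsilon$ for all $n\ge n_0$, all $k$, and all $x\in I$; note that $n_0$ depends only on $f$, not on $j$. The key auxiliary statement is: for $n\ge n_0$ and $y\in I$, if $(f_n)^k(y)\in B_\varepsilon(p)$ for every $k\ge 0$, then $y=p$. Indeed, (CC*) then forces $f^k(y)\in B_{2\varepsilon}(p)\subseteq V_p$ for all $k\ge 0$, and the isolating property for $f$ yields $y=p$.

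Both conclusions follow by specializing $y$. Taking $y=f_n(p)$ for $p\in Fix(f)$, one has $(f_n)^k(y)=(f_n)^{k+1}(p)$, which is within $\varepsilon$ of $f^{k+1}(p)=p$, hence $y=p$ and $Fix(f)\subseteq Fix(f_n)$. Given $x$ with $f^j(x)=p\in Fix(f)$, take $y=(f_n)^j(x)$; then $(f_n)^k(y)=(f_n)^{j+k}(x)$ lies within $\varepsilon$ of $f^{j+k}(x)=p$, so $y=p$ and $x\in(f_n)^{-j}(Fix(f))$. Conversely, if $(f_n)^j(x)=p\in Fix(f)\subseteq Fix(f_n)$, then $(f_n)^{j+k}(x)=p$, so by (CC*) $f^{j+k}(x)$ lies within $\varepsilon$ of $p$; applying the isolating property of $f$ to $y=f^j(x)$ gives $y=p$, i.e.\ $x\in f^{-j}(Fix(f))$. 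The main obstacle I anticipate is the delicate direct justification of the isolating property at fixed points that happen to coincide with turning points of the piecewise monotone structure; routing around this via the Parry conjugacy to a constant-slope model is the cleanest workaround.
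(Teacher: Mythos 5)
Your proposal is correct and follows essentially the same route as the paper: both arguments rest on a local repelling property of the (finitely many) fixed points of a transitive piecewise monotone map --- your isolating-neighborhood formulation is just the contrapositive of the paper's statement that every point $\varepsilon$-close to a $p\in Fix(f)$ has an iterate more than $3\varepsilon$ away --- combined with the uniform-in-$k$ closeness supplied by (CC*). The only cosmetic difference is that the paper runs the three inclusions as separate triangle-inequality contradictions while you package them through one auxiliary statement, and your suggestion to justify the repelling property via Parry's constant-slope model is a reasonable substitute for the paper's unproved ``it is easy to see''.
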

\begin{proof}
First, since $f$ is piecewise monotone and transitive, it is easy to see that the set $Fix(f)$ is finite, and there is $\varepsilon>0$ such that for every $ x\in Fix(f)$ and for every $y\in (x-\varepsilon,x+\varepsilon)$ there is $l\in\mathbb{N}$ such that
\begin{equation}\label{okoli}
|f^l(x)-f^l(y)|=|x-f^l(y)|>3\varepsilon.
\end{equation}  
Now, there is $n_0\in \mathbb{N}$ such that for every $n\geq n_0$, every $k\in\mathbb{N}$ and every $x\in I$ 
\begin{equation}\label{CC*}
d((f_n)^k(x),f^k(x))<\varepsilon.
\end{equation}
Let $x$ be a fixed point of $f$, $n\geq n_0$ and assume that $f_n(x)\neq x$. Since $f_n(x)\in (x-\varepsilon, x+\varepsilon)$, from (\ref{okoli}) we get that there is integer $l$ such that 
$$|f^l(f_n(x))-f^l(f(x))|>3\varepsilon.$$
Using triangular inequality and (\ref{CC*}) we get 
$$|x-f^l(f_n(x))|\leq |x-(f_n)^l(f_n(x))|+|(f_n)^l(f_n(x))-f^l(f_n(x))|,\\$$
$$|(f_n)^l(f_n(x))-f^l(f_n(x))|\geq |x-f^l(f_n(x))|- |x-(f_n)^l(f_n(x))|>3\varepsilon-\varepsilon=2\varepsilon,$$
which is contradiction. Therefore $x\in Fix(f_n)$.\\
\\
Now let $y\in f^{-j}(x)$, $x\in Fix(f)$, $n\geq n_0$ and assume that $(f_n)^j(y)\neq x$. Again, from (\ref{okoli}) there is $l\in\mathbb{N}$ such that 
$$|f^l((f_n)^j(y))-f^l(f^j(y))|>3\varepsilon.$$
Using triangular inequality and (\ref{CC*}) we get 
$$|f^l(f^j(y))-f^l(f_n)^j(y))|\leq |f^l(f^j(y))-f^l_n((f_n)^j(y))|+|(f_n)^l(f_n)^j(y))-f^l(f_n)^j(y))|,\\$$
$$|(f_n)^l(f_n)^j(y)-f^l(f_n)^j(y))|\geq |f^l(f^j(y))-f^l(f_n)^j(y))|- |f^l(f^j(y))-(f_n)^l(f_n)^j(y))|>$$
$$>3\varepsilon-\varepsilon=2\varepsilon,$$
therefore $(f_n)^j(y)=x$ and $f^{-j}(Fix(f))\subset (f_n)^{-j}(Fix(f)).$\\ \\
Let $n\geq n_0$, $x\in Fix(f)$ and $w\in I$ such that $(f_n)^j(w)=x$ and $f^j(w)\neq x$. From inequality (\ref{CC*}) we get that $|(f_n)^j(w)-f^j(w)|=|x-f^j(w)|<\varepsilon$, using (\ref{okoli}) there is integer $l$ such that $|(f_n)^{j+l}(w)-f^{j+l}(w)|=|x-f^{j+l}(w)|>3\varepsilon$, hence $(f_n)^{-j}(Fix(f))\subset f^{-j}(Fix(f))$, which completes the proof.
\end{proof}
Remark. The lemma \ref{fix} can be further improved, one can show equality of the sets $\{x;f^j(x)=p \wedge f^{j-1}(x)\neq p\}=\{x;(f_n)^j(x)=p \wedge  (f_n)^{j-1}(x)\neq p\}$, where $ p $ is a fixed point of the function $ f $.
\begin{lemma}\label{rovnost}
Let f, $f_n \in C(I)$, $n\in \mathbb{N}$ be surjective maps such that f is uniform limit of $f_{1,\infty}$. If f is transitive, piecewise monotone map, and $f_{1,\infty}$ satisfies condition (CC*), then for every $x\in Prefix(f)$ there is $n_0$ such that for every $n\geq n_0$ $f_n(x)=f(x)$.
\end{lemma}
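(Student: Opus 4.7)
The plan is to discretize the problem by combining Lemma \ref{fix} with the finiteness of preimage sets under a piecewise monotone map, and then let (CC*) pick out the correct element. Given $x \in Prefix(f)$, choose the minimal integer $j \geq 0$ with $f^j(x) \in Fix(f)$. If $j = 0$, then $x \in Fix(f)$, so by Lemma \ref{fix} there is $n_0$ with $x \in Fix(f_n)$ for every $n \geq n_0$, giving $f_n(x) = x = f(x)$ immediately.

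Assume $j \geq 1$, and set $T := f^{-(j-1)}(Fix(f))$. Since $f$ is transitive and piecewise monotone, $Fix(f)$ is finite, and each preimage step multiplies cardinality at most by the number of monotone laps, hence $T$ is finite. From $f^{j-1}(f(x)) = f^j(x) \in Fix(f)$ we get $f(x) \in T$. By Lemma \ref{fix} (a single $n_0$ serving all preimage levels), for $n \geq n_0$ we have
$$(f_n)^{-j}(Fix(f)) = f^{-j}(Fix(f)) \quad \text{and} \quad (f_n)^{-(j-1)}(Fix(f)) = T.$$
The first equality together with $x \in f^{-j}(Fix(f))$ yields $(f_n)^j(x) \in Fix(f)$, i.e., $(f_n)^{j-1}(f_n(x)) \in Fix(f)$; the second equality then forces $f_n(x) \in T$.

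Set $\eta := \min\{|a-b| : a, b \in T,\; a \neq b\}$, which is positive by finiteness of $T$. Applying (CC*) with $k=1$ and $\varepsilon = \eta/2$ produces $n_1 \geq n_0$ such that $|f_n(x) - f(x)| < \eta/2$ for all $n \geq n_1$. Since $f(x)$ and $f_n(x)$ both lie in $T$ and are separated by strictly less than $\eta$, they must coincide, giving $f_n(x) = f(x)$ for every $n \geq n_1$.

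The main hurdle is choosing the right reduction: (CC*) alone only delivers proximity of orbits, not equality of values, so Lemma \ref{fix} is indispensable for trapping $f_n(x)$ inside a prescribed finite set. Once that trapping is in place, finite-set separation closes the argument; the rest is a bookkeeping check that the constant $n_0$ from Lemma \ref{fix} can be chosen to work at both levels $j$ and $j-1$ simultaneously, which is immediate from the "for every $j$" quantifier in that lemma.
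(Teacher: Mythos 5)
Your proof is correct and follows essentially the same route as the paper: both arguments use Lemma \ref{fix} to trap $f_n(x)$ and $f(x)$ in the same finite preimage set of the fixed points, and then invoke (CC*) with a tolerance below the minimal separation of that finite set to force equality. The only cosmetic differences are that you work with $f^{-(j-1)}(Fix(f))$ rather than the preimages of the single fixed point $p=f^j(x)$, and you treat the case $j=0$ separately; neither changes the substance.
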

\begin{proof}
Let $x\in Prefix(f)$, then there is $j\in\mathbb{N}$ and $p\in Fix(f)$ such that $f^j(x)=p$. By Lemma \ref{fix}, there is $n_1\in\mathbb{N}$ such that for every $n\geq n_1$ we get $f_n^j(x)=p$ and $\mathrm{Orb^+}_{f_n}(x)\subset f^{-j}(p)$. Denote by $\varepsilon=\min\{|x-y|;x,y\in f^{-j}(p)\}$ then there is $n_2\in\mathbb{N}$ such that for every $n\geq n_2$, every $k\in\mathbb{N}$ and every $x\in I$ $d((f_n)^k(x),f^k(x))<\varepsilon$. Set $n_0=\min \{n_1,n_2\}$ and the result follows.
\end{proof}
\begin{proof}[Proof of Theorem \ref{main1}]
Let $0=c_1<c_2<\dots<c_N=1$ be critical points of $f$. Denote by $C=\{c_i\}_{i=1}^N$ the set of critical points of $f$. We will prove equality of $f$ and $f_n$ individually on each open interval $(c_i,c_{i+1})$.\\
Fix $(c_i,c_{i+1})$, without loss of generality we can assume that $f|_{(c_i,c_{i+1})}$ is increasing. Let $\varepsilon_{a_i}=\min\{y-c_i;y\in(c_i,c_{i+1})\wedge f(y)\in C\}$ and \\
$\varepsilon_{b_i}=\min\{c_{i+1}-y;y\in(c_i,c_{i+1})\wedge f(y)\in C\}$ and set $\varepsilon_{c_i}=\min \{\varepsilon_{a_i},\varepsilon_{b_i}\}$. Now let $p\in Fix(f)$, since $f$ is transitive the set $\bigcup_{l=0}^{\infty}f^{-l}(p)$ is dense, therefore there exists $a_i,b_i\in (c_i,c_{i+1})$ preimages of $p$ such that $a_i-c_i<\varepsilon_{c_i}/2$ and $c_{i+1}-b_i<\varepsilon_{c_i}/2$. By Lemma \ref{rovnost} there is $\varepsilon_{a_ib_i}$ and $n_{a_ib_i}$ such that for every $n\geq  n_{a_ib_i}$ is true that $f(a_i)=f_n(a_i)$, $f(b_i)=f_n(b_i)$ and for every $k\in\mathbb{N}$ $|f^k-(f_n)^k|<\varepsilon_{a_ib_i}$. Set $\varepsilon_i=\min\{\varepsilon_{c_i},\varepsilon_{a_ib_i}\}$ then there is $n_i$ such that for every $n\geq n_i$ we get $|f^k-(f_n)^k|<\varepsilon_{i}$.\\
Let $n\geq n_i$, assume that there is an integer $k$ such that for any integer $l<k$ $f(x)=f_n(x),$ $x\in f^{-l}(p)$ and there is a point $x_0\in f^{-k}(p)$ such that $f(x_0)\neq f_n(x_0)$, otherwise $f$ and $f_n$ are equal on dense set $\bigcup_{l=0}^{\infty}f^{-l}(p)$. If $x_0\in (c_i,c_{i+1})$ then there are three cases.\\
\textit{Case I}: $x_0\in [a_i,b_i]$\\
If $x_0=a_i$, resp. $x_0=b_i$ then we get contradiction with definition of $a_i,$ resp. $b_i$ and $n_i$. Therefore we can assume that $x_0\in(a_i,b_i)$. Recall that we assumed that $f|_{(c_i,c_{i+1})}$ is increasing. \\
If $f(x_0)<f_n(x_0)$, then since $f_n$ is continuous and $f_n(a_i)=f(a_i)$ there is a point $x_1\in (a_1,x_0)$ such that $f_n(x_1)=f(x_0)$ therefore $x_1\in f^{-k}(p)$. It easy to see that $f(x_1)<f_n(x_1)$ hence there is a point $x_2\in(a_i,x_1)$ such that $f_n(x_2)=f(x_1)$ and $x_2\in f^{-k}(p)$. We can continue this construction for every $m\in\mathbb{N}$, therefore $\#f^{-k}(p)=\infty$ which is contradiction with assumption that $f$ is piecewise linear. The case $f(x_0)>f_n(x_0)$ can be proved analogously.\\
\textit{Case II} $x\in(c_i,a_i)$\\
If $f(x_0)>f_n(x)$, then by analogous argumentation as in the Case I we get a contradiction.\\
If $f(x_0)<f_n(x_0)$, then by assumptions $f^2(x_0)=f(f_n(x_0))=f_n(f(x_0))=(f_n)^2(x_0)$ hence there is at least one critical point of $f$ in the interval $(f(x_0),f_n(x_0))$. Therefore one can easily show that $|f(x_0)-f_n(x_0)|\geq 2k\delta$, where $k$ is a slope of $f$ and $\delta=\min\{y-x_0;x_0<y \wedge f(y)\in C\}$, so from the definition of $a_i$ we get that
$$|f(x_0)-f_n(x_0)|>2k\frac{\varepsilon_{c_i}}{2}=k\varepsilon_{c_i}>\varepsilon_i,$$
which is a contradiction.\\
\textit{Case III:} $x_0\in(b_i,c_{i+1})$\\
The proof is analogous to the Case II.
\smallbreak
Therefore for every $n\geq n_i$ and every $x\in (c_i,c_{i+1})$ we get $f_n(x)=f(x)$. Similar result can be obtain for every interval $(c_j,c_{j+1})$, $j=1,\dots,N-1$ with respect to $n_j$. Set $n_0=\max\{n_1,n_2,\dots,n_{N-1}\}$, then for every $n\geq n_0$ and every $x\in I\setminus C$ we get $f_n(x)=f(x)$ and since $C$ is finite set the result follows.
\end{proof}
\begin{theorem}
Let f, $f_n \in C(I)$, $n\in \mathbb{N}$ be surjective maps such that f is uniform limit of $f_{1,\infty}$. If f is transitive, piecewise monotone map and $f_{1,\infty}$ satisfies condition (CC*), then there is $n_0\in\mathbb{N}$ such that for every $n\geq n_0$ $f_n(x)=f(x)$.
\end{theorem}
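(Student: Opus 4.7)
The plan is to reduce the theorem to Theorem \ref{main1} by a topological conjugacy. A classical result of Parry states that every topologically transitive, continuous, piecewise monotone interval map with positive topological entropy is topologically conjugate to a piecewise linear interval map of constant slope $\exp(h(f))$; and every transitive piecewise monotone interval map has positive topological entropy, so this applies to our $f$. The outcome is a homeomorphism $h \colon I \to I$ such that $g := h \circ f \circ h^{-1}$ is piecewise linear of constant slope.

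Setting $g_n := h \circ f_n \circ h^{-1}$, I would verify that the pair $(g, g_{1,\infty})$ satisfies the hypotheses of Theorem \ref{main1}. Uniform continuity of $h$ and $h^{-1}$ on the compact interval $I$ transfers uniform convergence $f_n \to f$ to $g_n \to g$, and surjectivity of each $g_n$ follows from surjectivity of $f_n$ together with bijectivity of $h$. Transitivity of $g$ is immediate from the conjugacy. For condition (CC*), the identities $(g_n)^k = h \circ (f_n)^k \circ h^{-1}$ and $g^k = h \circ f^k \circ h^{-1}$, combined with uniform continuity of $h$, show that (CC*) for $(f_n)$ passes to $(g_n)$. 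Theorem \ref{main1} then produces $n_0$ such that $g_n = g$ for every $n \ge n_0$, and conjugating back yields $f_n = h^{-1} \circ g_n \circ h = h^{-1} \circ g \circ h = f$ for all such $n$.

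The main obstacle is invoking Parry's conjugacy theorem in precisely the form needed, together with the auxiliary fact that every transitive piecewise monotone interval map has positive entropy. A purely internal alternative would be to mimic the proof of Theorem \ref{main1} directly in the piecewise monotone setting; the hard part there is Case II, where the lower bound $|f(x_0) - f_n(x_0)| \ge 2k\delta$ used the constant slope $k$. Without constant slope one would have to replace that estimate by a purely topological argument that uses only finiteness of the sets $f^{-j}(Fix(f))$ and local monotonicity of $f$ around its critical points, which looks substantially more delicate than the conjugacy shortcut, so I would favour the Parry approach.
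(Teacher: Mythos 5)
Your proposal follows essentially the same route as the paper: conjugate $f$ via Parry's theorem to a constant-slope map $g$, transfer uniform convergence and (CC*) to the conjugated system $g_n = h\circ f_n\circ h^{-1}$ using uniform continuity of $h$, apply Theorem \ref{main1}, and conjugate back. The argument is correct; your extra remark that transitivity of a piecewise monotone interval map guarantees the positive entropy needed for Parry's theorem is a point the paper leaves implicit.
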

\begin{proof}

By \cite{Pa} every transitive piecewise monotone interval map $f$ is positively conjugate by a map $h$ to a map of constant slope $g$. Let $g_n=h\circ f_n\circ h^{-1}$. First we show that the NDS $g_{1,\infty}$ with uniform limit $g$ satisfies (CC*) condition. Fix $\varepsilon>0$ and observe that 
\begin{equation}\label{konjugacy1}
(g_n)^k=(h\circ f_n\circ h^{-1})^k=h\circ (f_n)^k\circ h^{-1}
\end{equation}

Since $h$ is a continuous map there is $\delta >0$ such that for any $x,y\in I$ such that $d(x,y)<\delta$ is true that 
\begin{equation}\label{konjugacy2}
d(h(x),h(y))<\varepsilon.
\end{equation} 
Also since $f_{1,\infty}$ satisfies (CC*) condition there is $n_0$ such that for any $n\geq n_0$, $k\in\mathbb{N}$ and $x\in I$ 
\begin{equation}\label{konjugacy3}
d((f_n)^k(x),f^k(x))<\delta
\end{equation}

Therefore for any $n\geq n_0$ by (\ref{konjugacy1}), (\ref{konjugacy2}) and (\ref{konjugacy3}) we get
\begin{align*}
d((g_n)^k(x),g^k(x))&=d((h\circ (f_n)^k\circ h^{-1})(x),(h\circ f^k\circ h^{-1})(x))=\\
&=d(h\circ ((f_n)^k\circ h^{-1})(x),h\circ (f^k\circ h^{-1})(x))<\varepsilon\\
\end{align*}
Thus $g_{1,\infty}$ satisfies (CC*) condition and the rest of the proof follows from Theorem~\ref{main1}.
\end{proof}

Theorem \ref{main1} can not be improved for general interval map, as it is showed in the following lemma. But weaker result is obtained in Theorem \ref{main2}.
\begin{lemma}
There are a transitive function $f\in C([0,1])$ and a nonautonomous system $f_{1,\infty}=\{f_i\}_{i=0}^{\infty}$, such that $f_{1,\infty}$ satisfies condition $CC^*$ and $f\neq f_n$ for every natural number $n$. 
\end{lemma}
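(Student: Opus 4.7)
The goal is to exhibit a transitive interval map $f$ and a nonautonomous system $f_{1,\infty}$ satisfying (CC*) and converging uniformly to $f$, with $f_n\neq f$ for every $n$. In view of Theorem~\ref{main1}, the map $f$ cannot be piecewise linear of constant slope, so the construction must exploit the flexibility given up when this rigidity is dropped. In particular, the argument in Case II of the proof of Theorem~\ref{main1} relied on the uniform lower bound $|f(x_0)-f_n(x_0)|\ge 2k\delta$ coming from the constant slope $k$; for a general transitive map this quantitative obstruction vanishes and there is room for genuine perturbations.

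My strategy is to take $f$ to be a transitive interval map whose local dynamics in some region is only mildly expanding, so that small perturbations do not get amplified exponentially under iteration. A natural candidate is a map possessing a neutral (parabolic) fixed point $p$ with $f'(p)=1$; around $p$ the map is close to the identity and localized perturbations remain controllable. I would then define $f_n$ to coincide with $f$ outside a shrinking neighborhood $U_n$ of $p$ and to differ from $f$ inside $U_n$ by a continuous bump of amplitude $\delta_n\to 0$, preserving continuity and surjectivity. By construction $f_n\neq f$ for every $n$ and $f_n\rightrightarrows f$.

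The delicate part, and also the main obstacle, is verifying (CC*). Because $f$ and $f_n$ agree off $U_n$, the iterated discrepancy $d((f_n)^k(x),f^k(x))$ can only grow at steps when the orbit visits $U_n$. Near the neutral fixed point the local derivative is $\approx 1$, so each such visit contributes an additive error of order $\delta_n$ and the subsequent iterates do not amplify it exponentially. With $\delta_n$ and $|U_n|$ tending to zero sufficiently fast as $n\to\infty$, the cumulative error can be held below any prescribed $\varepsilon$ uniformly in $k\in\mathbb N$ and $x\in I$ for all $n\ge n_0(\varepsilon)$. The hard point is achieving uniformity in $k$: this is where the neutrality of $p$ is essential, since at a hyperbolic point the error would grow exponentially and spoil (CC*); that obstruction is exactly what Theorem~\ref{main1} exploits to force $f_n=f$ in the constant-slope case.
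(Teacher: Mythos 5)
There is a genuine gap, and it sits exactly where you flag it: the uniformity in $k$ for (CC*). Your mechanism is that near a neutral fixed point $p$ each visit to $U_n$ adds an error of order $\delta_n$ which is ``not amplified exponentially.'' Two things defeat this. First, an orbit may visit $U_n$ (or linger in it -- that is what neutrality causes) unboundedly many times, so even without any amplification the accumulated discrepancy is of order (number of visits)$\times\delta_n$, which is not bounded uniformly in $k$. Second, and more fundamentally, every transitive interval map is sensitive, and this is precisely the property the paper's Lemma~\ref{fix} exploits: once $(f_n)^j(x)$ and $f^j(x)$ are two \emph{distinct} points, both orbits are subsequently driven by maps equal to $f$ outside $U_n$, and sensitivity/expansion of $f$ away from $p$ will magnify the gap to a macroscopic size for suitable $x$, violating (CC*) for small $\varepsilon$. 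A neutral fixed point only delays this; it cannot prevent it, because transitivity forbids the dynamics from being non-expanding everywhere the separated orbits travel. So the proposal as stated would not survive the verification step, and no choice of $\delta_n, U_n$ rescues it unless something qualitatively stronger is arranged.

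That stronger ingredient is what the paper's construction supplies, and it is a different idea from yours: the perturbed orbit must \emph{exactly rejoin} the unperturbed orbit after finitely many steps, so that no discrepancy is ever handed to the expanding dynamics. Concretely, the paper builds $f$ as a countably-piecewise-linear map of slope $\pm 2$ with small tent-shaped pieces accumulating at a point, and $f_m$ flips the orientation of $f$ on certain tiny tents in such a way that $f^2(x)=(f_m)^2(x)$ for every perturbed $x$; hence $d((f_n)^k(x),f^k(x))$ is bounded by the one-step discrepancy $2^{-(2n+1)}$ for all $k$, and (CC*) follows trivially. Note also that this shows your opening inference is too strong: one need not give up constant slope (the example has $|f'|=2$ everywhere it is differentiable); what must be given up is the \emph{finiteness} of the pieces assumed in Theorem~\ref{main1}. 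If you want to salvage your approach, the fix is not a parabolic point but a perturbation engineered so that $f$ and $f_n$ have a common iterate on the support of the perturbation.
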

\begin{proof}
Let $n$ be a natural number, we define $f$ as follows
$$f(x)=\left\{\begin{array}{l@{\quad}l}
0&,x=0\\
\smallbreak
2x& ,x\in \left[\frac{2^{2n+2}-4}{2^{2n+3}},\frac{2^{2n+2}-3}{2^{2n+3}}\right)\\
\smallbreak
-2x+\frac{2^{2n+2}-3}{2^{2n+1}}&,x\in  \left[\frac{2^{2n+2}-3}{2^{2n+3}},\frac{2^{2n+2}-2}{2^{2n+3}}\right)\\
\smallbreak
1&,x=\frac{1}{2}\\
\smallbreak
2x-\frac{2^{2n-2}}{2^{2n-1}}&,x\in \left[\frac{2^{2n+2}-5}{2^{2n+2}},\frac{2^{2n+2}-4}{2^{2n+2}}\right)\\
\smallbreak
2-2x&,x\in \left[\frac{2^{2n+2}-4}{2^{2n+2}},\frac{2^{2n+2}-3}{2^{2n+2}}\right)\\
\smallbreak
0&,x=1\\
linear&,elsewhere
\end{array}
\right.$$
Now, for every natural number $m$ we define $f_m$ in the following way 
$$f_m(x)=\left\{\begin{array}{l@{\quad}l}
\smallbreak
-2x+\frac{2^{2n+2}-4}{2^{2n+1}}&,x\in \left[\frac{2^{2n+2}-4}{2^{2n+3}},\frac{2^{2n+2}-3}{2^{2n+3}}\right)\quad n\geq m\\
\smallbreak
2x-\frac{1}{2^{2n+1}}&, x\in \left[\frac{2^{2n+2}-3}{2^{2n+3}},\frac{2^{2n+2}-2}{2^{2n+3}}\right)\quad n\geq m\\
\smallbreak
f(x)&, elsewhere
\end{array}
\right.$$
First we verify that the nonautonomous system $f_{1,\infty}$ satisfies condition $CC^*$,i.e. for every $\varepsilon >0$ there is an $n_0$ such that for every $n\geq 0$, $k\in\mathbb{N}$ and every  $x\in I$ 
$$d((f_n)^k(x),f^k(x))<\varepsilon$$
From the definitions of $f$ and $f_m$ it is obvious that it is sufficient to focus only on points which lies in the interval $\left[\frac{2^{2n+2}-4}{2^{2n+3}},\frac{2^{2n+2}-2}{2^{2n+3}}\right)$, but how can one easily verify, for every such $x$ it is true that $f^2(x)=(f_m)^2(x)$. Therefore we get
$$d((f_n)^k(x),f^k(x))=d\left(f\left(\frac{2^{2n+2}-3}{2^{2n+3}}\right),f_m\left(\frac{2^{2n+2}-3}{2^{2n+3}}\right)\right)=\frac{1}{2^{2n+1}}$$
Thus $f_{1,\infty}$ satisfies $CC^*$ condition.\\ \\
It remains to prove that $f$ is transitive. Let $U$ be a subset of $I$, then since the absolute value of slope of $f$ is everywhere atleast 2, there are natural numbers $k,l$ such that $y_l\in f^k(U)$, where $y_l=\frac{2^{2l+2}-4}{2^{2l+2}}$. And since $f(y_l)=\frac{1}{2^{2l-1}}$, we get that $0\in f^{k+2l+1}(U)$, therefore there is a natural number  $K$ such that $f^K(U)=[0,1]$ which completes the proof.
\end{proof}

\begin{theorem}\label{main2}
Let f, $f_n \in C(I)$, $n\in \mathbb{N}$ be surjective maps such that f is uniform limit of $f_{1,\infty}$. If f is transitive map with infinite intervals of constant slope $k$ and $f_{1,\infty}$ satisfies condition (CC*), then for every $\varepsilon>0$ there is a set $A$ and $n_0\in\mathbb{N}$ such that for every $n\geq n_0$ and every $x\in A$ we get $f_n(x)=f(x)$. Moreover $A=\bigcup_{i=1}^N A_i$, where $A_i$ is a subset of interval of monotonicity of $f$ and $\sum_{i=1}^N|A_i|\geq 1-\varepsilon$.
\end{theorem}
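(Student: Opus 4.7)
The plan is to adapt the proof of Theorem~\ref{main1} in a localized and truncated form. Fix $\varepsilon>0$. Since $f$ has countably many monotonicity intervals $\{J_i\}_{i\geq 1}$ with $\sum_i|J_i|=1$, I would first choose $N\in\mathbb N$ so that the union of the $N$ longest, say $J_1,\dots,J_N$, has total length greater than $1-\varepsilon/2$. Within each selected $J_i=(c_i^\ell,c_i^r)$, transitivity of $f$ ensures that $\bigcup_{l\geq 0}f^{-l}(p)$ is dense for any chosen $p\in Fix(f)$, so I can pick preimages $a_i<b_i$ of $p$ inside $J_i$ with $|J_i\setminus[a_i,b_i]|<\varepsilon/(2N)$. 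Setting $A_i=[a_i,b_i]$ and $A=\bigcup_{i=1}^N A_i$ then gives $\sum_i|A_i|>1-\varepsilon$, which is the required measure estimate.

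Next, I would establish localized analogues of Lemma~\ref{fix} and Lemma~\ref{rovnost}. The original proofs hinge on $Fix(f)$ being finite in order to choose a uniform separation parameter. In the present setting $Fix(f)$ may be infinite, but only finitely many of its elements lie in the compact set $\bigcup_{j\leq J_0}f^j(A_i)$ for any prescribed depth $J_0$, so the same uniform separation argument applies locally. Combined with condition (CC*), this yields $f_n(a_i)=f(a_i)$ and $f_n(b_i)=f(b_i)$, as well as $(f_n)^j(x)=f^j(x)$ for every $x\in A_i\cap f^{-j}(p)$ and every $j\leq J_0$, provided $n$ is sufficiently large.

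Then I would run the Cases I, II, III contradiction argument from the proof of Theorem~\ref{main1} on each $A_i$. Because $f|_{J_i}$ has constant slope $\pm k$ and $A_i$ is bounded away from the endpoints of $J_i$, the cascade of preimages produced in each of the three cases remains inside $A_i$. If $A_i$ is further chosen so as to avoid accumulation points of $f^{-j}(p)$ for $j\leq J_0$ — arrangeable because constant slope $k$ gives explicit geometric control over how preimages can cluster inside a single monotonicity piece — the cascade produces only finitely many distinct elements of $f^{-k}(p)\cap A_i$, contradicting its construction as an infinite sequence. Hence $f_n=f$ on each $A_i$ for $n$ sufficiently large, and setting $n_0=\max_{1\leq i\leq N}n_i$ finishes the proof.

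The principal obstacle is the localization performed in the previous paragraph: one must simultaneously keep $|J_i\setminus A_i|$ small enough to preserve the measure bound and restrict $A_i$ enough that, up to the relevant iteration depth, preimage sets intersected with iterates of $A_i$ are genuinely finite. This is precisely the point at which the hypothesis of \emph{infinite intervals of constant slope $k$} carries its weight, since the constant slope governs both the location of preimages across pieces and the uniform lower bound on $|f(x_0)-f_n(x_0)|$ needed in the Case~II argument.
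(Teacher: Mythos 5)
Your overall architecture matches the paper's: choose a fixed point $p$, use the density of its backward orbit to place grid points $a_i<b_i$ near the ends of finitely many laps of $f$, show $f_n=f$ on the grid via analogues of Lemmas~\ref{fix} and~\ref{rovnost}, and upgrade to equality on $[a_i,b_i]$ by the Case~I cascade (Cases~II and~III are in fact not needed here, since both endpoints of $A_i$ are grid points). The measure bookkeeping is fine and more explicit than the paper's. The genuine gap is in how you secure finiteness of the preimage sets of the chosen fixed point. Both the analogue of Lemma~\ref{rovnost} (which needs the minimal gap $\min\{|x-y|:x,y\in f^{-j}(p)\}$ to be positive) and the Case~I contradiction (which needs $f^{-k}(p)\cap A_i$ to be finite so that the strictly monotone cascade $x_0>x_1>x_2>\cdots$ of $k$-th preimages is impossible) require $\#f^{-j}(p)<\infty$. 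When $f$ has infinitely many laps this can fail: near an accumulation point of the lap partition $f^{-1}(y)$ may be infinite, and fixed points themselves may accumulate, so your assertion that only finitely many elements of $Fix(f)$, or of $f^{-j}(p)$, lie in a prescribed compact set is unjustified. Nor does monotonicity of $f|_{A_i}$ rescue you: $f^{-k}(p)\cap A_i$ injects into $f^{-(k-1)}(p)\cap f(A_i)$, and $f(A_i)$ may sweep across accumulation points of the partition, so the finiteness you need is global in the backward orbit of $p$, not local to $A_i$.

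The paper's proof turns on exactly this point: it first observes that the laps accumulate at only finitely many points, and from this selects a \emph{particular} fixed point $\mathbf x$ with $\#f^{-j}(\mathbf x)<\infty$ for every $j$; all subsequent steps are run with this $\mathbf x$, and then Lemmas~\ref{fix} and~\ref{rovnost} and the Case~I argument go through verbatim. Your substitute --- truncating at a depth $J_0$ and shrinking $A_i$ to avoid accumulation points of $f^{-j}(p)$ for $j\le J_0$ --- does not work as stated: the final continuity step needs $\bigcup_{j\ge 0}f^{-j}(p)$ to be dense in $A_i$, and the inductive cascade must run over all depths $k$ with a single $n_0$, so any restriction or largeness condition depending on $J_0$ destroys the conclusion. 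To repair the proposal, replace the arbitrary $p$ by a fixed point whose full backward orbit has finite levels, which is where the hypothesis of infinitely many laps of constant slope actually has to be exploited.
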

\begin{proof}
First observe that there is only finite set of points $\{x_i\}_{i=0}^K$ such that every neighbourhood $U_{x_i}$ of a point $x_i$  intersect infinitely many intervals of monotonicity. Hence there is a fixed point $\mathbf{x}$ such that $\#f^{-j}(\mathbf{x})<\infty$ for $j=0,1,2,\dots$\\
Analogously to Lemma \ref{fix} there is $n_0$ such that for every $n>n_0$ and every $j=0,1,2,\dots$ is true that $(f_n)^{-j}(\mathbf{x})=f^{-j}(\mathbf{x})$. Also, analogously to Lemma \ref{rovnost} for every finite set $\{y_i\}_{i=1}^L \subset \bigcup_{j=0}^{\infty}f^{-j}(\mathbf{x})$ there is $n_1$ such that for every $n>n_1$ $f_n(y_i)=f(y_i)$. Let $J$ be a interval of monotonicity of $f$ and assume that $[y_i,y_{i+1}]\subset J$ then for every $n>n_1$ and every $x\in[y_i;y_{i+1}]$ we get $f_n(x)=f(x)$. The proof of last equality is analogous to Theorem \ref{main1}, case I. 
\end{proof}
Remark. There is an open problem whether in the general interval NDS the condition (CC*) is sufficient to ensure that there is $n_0$ such that for every $n\geq n_0$ $f_n$ is topologicaly transitive if and only if $f$ is topologicaly transitive. 
\smallbreak

\noindent\textbf{Acknowledgements}

\noindent The research was supported by grant SGS/18/2019 from the Silesian University in Opava. Support of this institution is gratefully acknowledged. The authors thank Professor Marta \v{S}tef\'{a}nkov\'{a} for valuable suggestions and comments.

\end{document}